\definecolor{darkgreen}{cmyk}{1,0,1,.2}
\definecolor{m}{rgb}{1,0.1,1}
\definecolor{green}{cmyk}{1,0,1,0}
\definecolor{test}{rgb}{1,0,0}
\definecolor{cmyk}{cmyk}{0,1,1,0}
\long\def\red#1{\textcolor {red}{#1}}  
\long\def\blue#1{\textcolor {blue}{#1}}
\newtheorem{Equation}{}[section]
\newtheorem{example}[Equation]{Example}
\newtheorem{theorem}[Equation]{Theorem}
\newtheorem{proposition}[Equation]{Proposition}
\newtheorem{lemma}[Equation]{Lemma}
\newtheorem{definition}[Equation]{Definition}
\newtheorem{remark}[Equation]{Remark}
\def\ch{\operatorname{ch}}
\def\I{\operatorname{I}}
\def\End{\operatorname{End}}
\def\wexp{\what{\exp}}
\def\whexp{\what{\exp}}
\def\Supp{\operatorname{Supp}}
\def\Tr{\operatorname{Tr}}
\def\TR{\operatorname{TR}}
\def\tr{\operatorname{tr}}
\def\C{\mathbb C}
\def\N{\mathbb N}
\def\R{\mathbb R}
\def\S{\mathbb S}
\def\Z{\mathbb Z}
\def\maD{{\mathcal D}}
\def\maF{{\mathcal F}}
\def\maM{{\mathcal M}}
\def\cG{{\mathcal G}}
\def\maG{{\mathcal G}}
\def\cG{{\mathcal G}}
\def\maH{{\mathcal H}}
\def\maN{{\mathcal N}}
\def\cS{{\mathcal S}}
\def\maS{{\mathcal S}}
\def\what{\widehat}
\def\wtit{\widetilde}
\def\hM{\what{M}}
\def\pit{\pitchfork}
\def\wL{\widetilde{L}}
\def\dd{\displaystyle}
\def\pa{\partial}
\def\ep{\epsilon}
\begin{document}



\title[Atiyah covering index theorem for Riemannian foliations \today]
{Atiyah covering index theorem for Riemannian foliations\\
\today}


\author[M-T. Benameur]{Moulay-Tahar Benameur}
\address{Institut Montpellierain Alexander Grothendieck, UMR 5149 du CNRS, Universit\'e de Montpellier}
\email{moulay.benameur@umontpellier.fr}

\author[J.  L.  Heitsch \today]{James L.  Heitsch}
\address{Mathematics, Statistics, and Computer Science, University of Illinois at Chicago} 
\email{heitsch@uic.edu}

\thanks{MSC 2010: 19K56, 46L80, 58B34.   \\
Key words: Connes-Chern character, spectral triples, foliations.}

\begin{abstract} We use the symbol calculus for\ foliations developed in \cite{BH2016a}  to  derive a cohomological formula for the Connes-Chern character of the Type II spectral triple given in \cite{BH2016b}.  The same proof works for the Type I spectral triple of Connes-Moscovici. The cohomology classes of the two Connes-Chern characters induce the same map on the image of the maximal Baum-Connes map in K-theory, thereby proving an Atiyah $L^2$ covering index theorem.
\end{abstract}

\maketitle
\tableofcontents

\section{Introduction}

In \cite{CM95}, Connes and Moscovici developed  ``a general, in some sense universal, local index formula for arbitrary spectral triples of finite summability degree, in terms of the Dixmier trace and its residue-type extension."  Because of its extremely wide applicability, their formulas are quite complicated.  See also \cite{CM98}.   In \cite{K97}, Kordyukov considered a Type I Connes-Moscovici spectral triple naturally associated to a compact foliated manifold with a transversely elliptic operator with holonomy invariant principal symbol.  He proved that the spectral triple is finite dimensional,  and that its spectrum is simple and contained in the set $\{m \in \N \,\, | \,\, m \leq q \}$, where $q$ is the co-dimension of the foliation.  In \cite{BH2016b}, we extend Kordyukov's result to certain non-compact manifolds, including non-compact Galois coverings of compact foliated manifolds, which yields a Type II spectral triple (also called a semi-finite spectral triple), see \cite{B03, BF06},  with the same properties as the Type I Connes-Moscovici spectral triple.  

In this paper we use the symbol calculus for foliations developed in \cite{BH2016a} to derive a cohomological formula for the Connes-Chern characters of these Type I and Type II spectral triples.  In particular, we assume that we have a Riemannian foliation $F$ of a compact manifold $M$ with a transverse Dirac operator, and that there is a complementary foliation transverse to $F$.  The formulas we obtain are similar to that for the classical case of a Dirac operator on a compact manifold given in \cite{BF}. The same proof works for both the Type I spectral triple of Connes-Moscovici and Kordyukov and the Type II spectral triple of \cite{BH2016b}.  The Connes-Chern characters we obtain induce the same map on the image of the Baum-Connes map in K-theory, \cite{BC00}, so we obtain an Atiyah $L^2$ type covering index theorem for compact foliated manifolds with transversely elliptic operators.

It is possible to dispense with the assumption that $M$ admits a transverse foliation.  We show how to do this in Section \ref{statementCM} using the standard Morita reduction to transversals, and we consider spectral triples associated to a chosen complete transversal of $F$.   There are several reasons why we prefer to work on $M$.  Working on $M$ doesn't involve choices while working on $T$ does, so working on $M$  is more natural. Moreover, for  important generalizations, see \cite{BH2016b}, we want a  formula  in terms of characteristic classes on the ambiant manifold $M$.   There are natural examples where the restriction to a transversal doesn't work, but our techniques here do.  See Example \ref{Notrans}.

\medskip\noindent
{\em Acknowledgements.} It is a pleasure to thank A. Carey, P. Carrillo-Rouse, T. Fack, G. Hector, and P. Piazza
for helpful discussions.  We are also indebted to the referee for his cogent suggestions.
MB wishes to thank the french National Research Agency for support via the project ANR-14-CE25-0012-01 (SINGSTAR).

\section{Background}

We assume that the reader is familiar with the paper \cite{BH2016a}, and we  will freely use the notations of that paper.  In particular,  $F$ is a smooth Riemannian foliation  of the smooth closed Riemannian manifold $M$.  Then $(M,F)$ is of bounded geometry, so  all the leaves of $F$ and all the bundles associated to $M$ and $F$ are of bounded geometry.   Denote by $TM$ and $T^*M$  the tangent and cotangent bundles of $M$.   The dimension of $F$ is $p$ and the dimension of  $M$ is $n$, so the codimension of $F$ is $q=n-p$, which we assume is even.   If $q$ is not even, we replace $M$ by $M \times \S^1$ and $F$ by the obvious $p$ dimensional foliation it determines on $M \times \S^1$. The normal bundle of $F$ is denoted $\nu$ ($= TF^{\perp}$), and its conormal bundle is $\nu^*$.  The induced metrics on $\nu$ and $\nu^*$ are assumed to be bundle like.  The leaf of $F$ through a point $x$ is denoted $L_x$.    The  homotopy groupoid of $F$ is $\cG$, with the source and target maps $s, r: \cG \to M$.  $F_s$ is the foliation of $\cG$ whose leaves are given by $\wL_x = s^{-1}(x)$,  which is also denoted $\cG_x$.  $F_r$ is the foliation of $\cG$ whose leaves are given by $\wL^y = r^{-1}(y)$,  which is also denoted $\cG^y$.   We denote $\cG_x \cap \cG^y$ by $\cG_x^y$.   Note that  $r:\wL_x \to L_x$ is the simply connected covering map.   If $\gamma \in \cG_x$, the holonomy along $\gamma$  from $x = s(\gamma)$ to $r(\gamma)$ in $L_x$ is denoted $h_{\gamma}$. 

Denote by $E \to M$ a smooth complex vector bundle over $M$ with a Hermitian structure.  We assume that  $E$ is  basic, which means that there is an action on it by the {\em holonomy groupoid} and hence also by the homotopy groupoid, and that the Hermitian connection on it, denoted $\nabla^E$, is locally projectable, so its curvature $\Omega^E$ is basic, that is locally a pull-back from a transversal.  

Assume that $F$ is transversely spin.  Fix a spin structure on  $\nu^*$, and denote by $\cS_{\nu} = \cS^+_{\nu} \oplus \cS^-_{\nu}$ the associated spin bundle, with its natural splitting.  The bundles $\cS^\pm_\nu$ are  automatically basic vector bundles over $M$.  The Hilbert space of $L^2$-sections of the bundle $\cS_{\nu} \otimes E$ over $M$ is denoted $\maH$, which is  $\Z_2$ graded  since  $\maS_{\nu}\otimes E$ is $\Z_2$-graded. 

The Levi-Civita connection on $\nu^*$ (respectively $\nu$) is  denoted $\nabla^{\nu}$, which is locally the pull-back of the Levi-Civita connection on a transversal.  It is also known as a Bott or basic connection. The connection  $\nabla^{\nu}$ induces a connection on $\cS_{\nu}$, which when combined with the connection $\nabla^E$ on $E$ gives a connection, also denoted $\nabla^{\nu}$, on $\cS_{\nu} \otimes E$.   The context should make clear on which bundle $\nabla^{\nu}$ is acting.

The  transverse Dirac operator $D$ on $\maH$  is given as follows.  Choose a local orthonormal basis $f_1,...,f_q$   of $\nu^*$ with dual orthonormal basis  $e_1,...,e_q$ of $\nu$.  For $u \in \maH$, set 
$$
\wtit{D}(u) \,\, = \,\, \sum_{1\leq i\leq q} f_i \cdot \nabla^{\nu}_{e_i}u,
$$
where $f_i \cdot$  is the operator $c(f_i)$, Clifford multiplication by $f_i$.  In general, $\wtit{D}$ is not self-adjoint.  The mean curvature vector field of $F$   is 
$\mu= \sum_{j=1}^p p_{\nu}(\nabla_{X_j} X_j)$ where $X_1,...,X_p$ is a local orthonormal framing of $TF$, $\nabla$ is the Levi-Civita connection on $M$, and $p_{\nu}:TM \to \nu$ is the projection.  When we think of $\mu$ as a covector (the isomorphism $\nu \simeq \nu^*$ being given by the inner product), then we denote Clifford multiplication by it by $c(\mu)$, and it is given by, see \cite{BH2016a},
$$
c(\mu) \,\, = \,\,  \sum_{j=1}^p  \sum_{i=1}^q \langle  [e_i,X_j],X_j \rangle f_i.
$$
The transverse Dirac operator $D$ associated to $F$ is
$$
D \,\, = \,\,  \wtit{D} \,\, - \,\, \frac{1}{2}c(\mu),
$$
which is self-adjoint.  See \cite{K07,GlK91}.

\section{Statement of the main result} \label{statement}

Recall  the type I spectral triple introduced in \cite{CM95} and \cite{K97}:

\begin{definition}
The Connes-Moscovici Type I spectral triple associated to $F$ is $(C_c^\infty (\cG), \maH, D)$.  The usual operator trace yields the super-trace, denoted $\Tr_s$, constructed using the Berezin integral \cite{Getz, BGV92}, and $a \in C^{\infty}_c(\cG)$  acts on $\maH$ by 
$$
a(u)(x) \,\, = \,\, \int_{\cG_{x}} a(\gamma^{-1}) h_{\gamma^{-1}}(u(r(\gamma))d\gamma.
$$
\end{definition}

The main theorem of \cite{K97} is (see also \cite{CM95}),
\begin{theorem} 
$(C_c^\infty (\cG), \maH, D)$ is a regular even spectral triple with simple dimension spectrum contained in the set  $\{k \in \N \, | \, k \leq q \}$.
\end{theorem}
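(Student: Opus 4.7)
The plan is to verify in turn the three ingredients packaged in the statement: (i) $(C_c^\infty(\cG), \maH, D)$ is an even spectral triple, (ii) it is regular, and (iii) its dimension spectrum is simple and contained in $\{k\in\N \mid k\leq q\}$.

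For (i), self-adjointness of $D$ on an appropriate core in $\maH$ is the content of the formula $D=\wtit D - \tfrac12 c(\mu)$ (see \cite{K07,GlK91}), together with essential self-adjointness which follows from the bounded geometry of $(M,F)$ and the fact that the leafwise holonomy action used to define $a \cdot u$ preserves $\maH$. The grading comes from the splitting $\cS_\nu=\cS_\nu^+\oplus\cS_\nu^-$, with respect to which Clifford multiplication by any transverse covector is odd; hence $D$ is odd. For $a\in C_c^\infty(\cG)$, boundedness of $[D,a]$ reduces, via the Leibniz rule applied to the convolution action along leaves and the fact that the connection $\nabla^{\nu}$ is locally projectable (so it commutes with the leafwise holonomy in the transverse directions), to the boundedness of a compactly supported first-order transverse differential expression on $\maH$. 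Compactness of $a(1+D^2)^{-1/2}$ then follows from the transverse ellipticity of $D$ together with standard heat-kernel estimates for the foliation supplied by the symbol calculus of \cite{BH2016a}.

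For (ii), regularity amounts to showing that $a$ and $[D,a]$ lie in $\bigcap_k \Dom(\delta^k)$, where $\delta=[|D|,\cdot]$. The natural strategy is to replace $|D|$ by its heat-kernel representative and use the symbol calculus of \cite{BH2016a} to compute iterated commutators within the algebra of transverse pseudodifferential operators built out of basic symbols and the convolution action of $C_c^\infty(\cG)$. Since the symbol of $D$ is basic and $a$ commutes with leafwise directions up to smoothing error, each commutator with $|D|$ drops the transverse order by one, so all iterated commutators remain bounded.

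For (iii), which is the main content, one analyses the zeta functions $\zeta_b(z)=\Tr(b\,|D|^{-z})$ for $b$ in the algebra generated by $C_c^\infty(\cG)$ and the iterated commutators with $|D|$. Via the Mellin transform, poles of $\zeta_b$ correspond to terms in the short-time heat expansion
$$
\Tr\!\bigl(b\,e^{-tD^2}\bigr) \,\sim\, \sum_{k\geq 0} a_k(b)\, t^{(k-q)/2}, \qquad t\to 0^+.
$$
Because $D$ is transversely elliptic of order one with $q$ effective transverse variables, the Connes-Moscovici/Kordyukov local calculus applied to the symbol calculus of \cite{BH2016a} produces this expansion with only integer powers of $t$ that are bounded below by $-q/2$, and with no $\log t$ terms; this yields simple poles located at $z=q-k$, $k\in\N$, all contained in $\{k\in\N\mid k\leq q\}$.

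The principal obstacle is step (iii): controlling the heat expansion to exclude both logarithmic terms (giving simplicity) and negative-order poles (confining the spectrum to $\{k\leq q\}$). The honest way forward is to reduce to a locally transverse calculation using a foliation chart in which $D^2$ becomes a transverse generalized Laplacian modulo leafwise smoothing operators, and then to apply the classical Greiner-Seeley parametrix expansion in the $q$ transverse variables; the basic nature of $\cS_\nu\otimes E$ and the bundle-like hypothesis on the metric are precisely what ensure that the leafwise direction contributes no extra poles.
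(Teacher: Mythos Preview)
The paper does not prove this theorem at all: it is stated as ``the main theorem of \cite{K97}'' and is simply quoted from Kordyukov's work (with a pointer also to \cite{CM95}). There is therefore no proof in the paper to compare your proposal against; the authors take the result as established input and move on to the Connes--Chern character computation.

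Your outline is broadly in the spirit of Kordyukov's original argument, which does proceed via a transverse pseudodifferential calculus and a short-time heat expansion in the $q$ transverse variables to read off the poles of the zeta functions. Two points in your sketch are imprecise, however. First, in part (i), compactness of $a(1+D^2)^{-1/2}$ cannot be attributed to ``transverse ellipticity of $D$'' alone: $(1+D^2)^{-1/2}$ is not compact by itself, since $D$ is only transversely elliptic. The compactness genuinely requires the factor $a$, which acts by convolution with a compactly supported smooth kernel along the leaves and is therefore leafwise smoothing; it is the combination of leafwise smoothing from $a$ and transverse smoothing from $(1+D^2)^{-1/2}$ that produces a compact operator. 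Second, in part (ii), the claim that ``each commutator with $|D|$ drops the transverse order by one'' is not the right formulation: $[|D|,b]$ has the \emph{same} order as $b$, not one lower. Regularity follows not from a drop in order but from the fact that iterated commutators remain in a fixed class of order-zero operators (bounded on $\maH$), which is what the pseudodifferential calculus actually delivers.

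If you want a self-contained proof here, the honest route is simply to cite \cite{K97} (and \cite{CM95}) as the paper does, or to reproduce Kordyukov's argument in detail; your sketch as written is not yet a proof but a plausible roadmap with the two inaccuracies noted above.
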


{{We now recall an important special case of the main result of \cite{BH2016b},  which will be used in the sequel.    In \cite{BH2016b}, we were concerned with a general bounded geometry foliation $(\hM, \what\maF)$ endowed with a proper action of a discrete group $\Gamma$, and with the associated transverse noncommutative geometry as provided by Connes' formalism of regular spectral triples, \cite{ConnesNCG}, and their semi-finite version as introduced in \cite{BF06}. By using a classical reduction method, we were led to the corresponding problem for the class of {\em{Riemannian bifoliations}} $(\hM, \what\maF\subset \what\maF')$, see again \cite{BH2016b}. }}

{{In the case of a free and proper action, we get a bounded geometry $\Gamma$-covering $(\hM, \what\maF\subset \what\maF')\to (M, \maF\subset \maF')$ of bounded geometry bifoliations.  The main result of \cite{BH2016b} is the following. }}
{{\begin{theorem} 
 Assume that ${\what D}$ is a transversely elliptic $\Gamma$-invariant pseudodifferential operator in the Connes-Moscovici sense for the larger foliation $\what\maF'$, which is essentially self-adjoint  with holonomy invariant transverse principal symbol. 
Then  the triple $(C_c^\infty (\cG), (\maM, \tau), \what{D})$ is a semi-finite spectral triple which is finitely summable of dimension equal to the Beals-Greiner codimension of $\what\maF'$.
\end{theorem}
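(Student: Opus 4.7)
The plan is to verify in turn the four ingredients of a semi-finite spectral triple: (i) existence of a semi-finite von Neumann algebra $(\maM,\tau)$ in which $C_c^\infty(\cG)$ represents, (ii) affiliation of $\what D$ to $\maM$, (iii) boundedness of the commutators $[\what D,a]$ for $a\in C_c^\infty(\cG)$, and then (iv) finite summability with the claimed transverse (Beals--Greiner) codimension of $\what\maF'$.

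First, I would construct $(\maM,\tau)$ as the standard covering von Neumann algebra associated with the bounded geometry $\Gamma$-covering $(\hM,\what\maF\subset\what\maF')\to(M,\maF\subset\maF')$. Concretely, $\maM$ is the commutant of the $\Gamma$-action on $L^2$-sections of $\cS_{\what\nu}\otimes\what E\to\hM$, and $\tau$ is the Atiyah-type trace obtained from a Borel fundamental domain of $\Gamma$ applied to the pointwise trace of the Schwartz kernel restricted to the diagonal. Standard bounded geometry arguments show this is well defined and gives a faithful normal semi-finite trace. Since $\what D$ is $\Gamma$-invariant and essentially self-adjoint, its spectral projections are $\Gamma$-invariant and belong to $\maM$, so $\what D$ is affiliated with $\maM$. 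The representation of $C_c^\infty(\cG)$ on $\maH$ lifts to $\hM$ through the groupoid pull-back, and the resulting operators commute with the $\Gamma$-action by the holonomy invariance, hence lie in $\maM$.

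Next, boundedness of $[\what D,a]$ for $a\in C_c^\infty(\cG)$ follows because $\what D$ is a first-order operator in the Beals--Greiner sense for $\what\maF'$ with smooth symbol, while $a$ acts by smoothing along the leaves of $\cG$ with compactly supported kernel; one combines the standard commutator formula with a partition of unity and the bounded geometry of the covering to show that $[\what D,a]$ extends to a bounded operator, uniformly in the base points. This part is a direct transcription of the Connes--Moscovici/Kordyukov arguments, now done $\Gamma$-equivariantly.

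The crux is (iv): showing that $a(1+\what D^2)^{-d/2}$ is $\tau$-trace class for $d$ strictly larger than the Beals--Greiner codimension $d_{BG}$ of $\what\maF'$ (with $\tau$-compactness for $d=d_{BG}$). I would proceed via the Connes--Moscovici/Beals--Greiner heat calculus: transverse ellipticity for $\what\maF'$ lets one construct an approximate resolvent, and hence a parametrix for $(1+\what D^2)^{-s}$, with Schwartz kernel lying in the Beals--Greiner class of order $-2s$ along $\what\maF'$. Because the operator is $\Gamma$-invariant and of bounded geometry, one obtains uniform Gaussian off-diagonal bounds and diagonal short-time asymptotics whose leading order controls the summability threshold: the diagonal expansion produces a pole of $\tau\bigl(a(1+\what D^2)^{-s}\bigr)$ at $s=d_{BG}/2$, so the triple is $d_{BG}$-summable and no lower summability is possible. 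The compactness of $M$ and the compact support of $a$ on $\cG$ ensure that the $\tau$-integration against the fundamental domain converges.

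The main obstacle is step (iv), specifically pushing the Beals--Greiner calculus through in the semi-finite setting: one must replace ordinary operator traces of heat-type operators on a compact manifold by the Atiyah-type $\tau$-trace, and check that all the local heat kernel asymptotics obtained on $M$ (as in Kordyukov's Type I result) lift to the covering $\hM$ with only uniform bounded-geometry remainders. Once this lift is established, the dimension count is automatic, and the three axioms of the semi-finite spectral triple are all verified.
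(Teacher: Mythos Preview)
The theorem you are trying to prove is not proven in this paper at all: it is merely quoted from \cite{BH2016b} as background (``We now recall an important special case of the main result of \cite{BH2016b}''). There is therefore no proof in the present paper to compare your proposal against; the authors import the result wholesale and use it as a black box.

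That said, your outline is a reasonable sketch of the strategy one expects in \cite{BH2016b}: construct the Atiyah von Neumann algebra $\maM = B(\what\maH)^\Gamma$ with its semi-finite trace, check affiliation of $\what D$, bound the commutators $[\what D,a]$, and then establish finite summability by controlling $\tau\bigl(a(1+\what D^2)^{-s}\bigr)$ via transverse heat-kernel or parametrix estimates in the Beals--Greiner calculus, lifted $\Gamma$-equivariantly under bounded geometry. You correctly identify step (iv) as the substantive one. One point you underplay is that in the bifoliation setting the transverse ellipticity is only with respect to the larger foliation $\what\maF'$, so the parametrix construction and the dimension count genuinely use the Beals--Greiner filtration rather than the ordinary pseudodifferential one; this is not just a cosmetic renaming of the codimension but affects how the symbol calculus and the Sobolev-type estimates are set up. If you want to flesh this into an actual proof you would need to consult \cite{BH2016b} directly, since the present paper gives no further details.
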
}}

{{Here 
${\what D}$ acts on the Hilbert space $\what\maH$ of $L^2$-sections of a bundle over $\hM$.   $\cG$ is  the monodromy groupoid of $\maF$,    $\maM= B(\what\maH)^\Gamma$ is the Atiyah von Neumann algebra of $\Gamma$-invariant operators on $\what\maH$, with its semi-finite trace $\tau$ as defined for instance in \cite{AtiyahCovering}. }}

{{For the special case of  a $\Gamma$-covering $(\hM, \what\maF)\to (M, \maF$), (so $\what\maF' = \what\maF$), of a Riemannian foliation, this theorem becomes
\begin{theorem}
Assume that ${\what D}$ is a transversely elliptic $\Gamma$-invariant pseudodifferential operator on $\hM$, which is essentially self-adjoint  with holonomy invariant transverse principal symbol. 
Then  the triple $(C_c^\infty (\cG), (\maM, \tau), \what{D})$ is a type II spectral triple which is finitely summable of dimension equal to the codimension of the foliation,  with simple semi-finite dimension spectrum contained in the set  $\{k \in \N \, | \, k \leq q \}$.
\end{theorem}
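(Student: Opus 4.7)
The finitely summable semi-finite triple structure of dimension $q$ is the $\what\maF'=\what\maF$ case of the preceding theorem: for a Riemannian foliation, the Beals-Greiner codimension of the transverse operator $\what D$ coincides with the ordinary codimension $q=\codim\maF$, and ``Type II'' is here a synonym for semi-finite in the sense of \cite{BF06}. The only genuinely new content of the statement is thus the assertion on the dimension spectrum.

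The plan is to reduce the semi-finite zeta functions on the $\Gamma$-covering to the corresponding Type I zeta functions on the base $(M,\maF)$, for which the simplicity and the bound $k\leq q$ are provided by \cite{K97}. For $b$ in the polynomial algebra generated by $C_c^\infty(\cG)$, the commutators $[\what D,C_c^\infty(\cG)]$, and the complex powers $|\what D|^{-2z}$, I would study the meromorphic continuation of $\zeta_b(z)=\tau(b|\what D|^{-2z})$ via the small-$t$ expansion of the semi-finite heat trace $\tau(b\,e^{-t\what D^2})$. Each such $b$ is $\Gamma$-invariant and, by the symbol calculus of \cite{BH2016a}, is a foliated pseudodifferential operator on $\hM$ whose transverse principal symbol descends to $M$.

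The bridge from $\hM$ down to $M$ is the standard Atiyah pushdown: if $\what T$ is a $\Gamma$-invariant operator on $\what\maH$ whose Schwartz kernel is smooth and sufficiently decaying off the diagonal, then $\tau(\what T)$ equals the integral over a fundamental domain of the pointwise trace of the kernel on the diagonal. Applied to $\what T=b\,e^{-t\what D^2}$, and combined with the local character of the heat asymptotics together with the $\Gamma$-equivariance of both $b$ and $\what D$, this identifies the small-$t$ expansion of $\tau(b\,e^{-t\what D^2})$ with that of the ordinary heat trace $\Tr(b_0\,e^{-tD^2})$ on $M$, where $b_0$ is the downstairs operator corresponding to $b$. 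Since \cite{K97} shows that the downstairs expansion contains only powers $t^{(k-q)/2}$ with $k\in\N$, $k\leq q$, and yields simple poles in the zeta functions, the same follows for $\zeta_b$, giving the required simple dimension spectrum contained in $\{k\in\N : k\leq q\}$.

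The main obstacle is to verify that the pushdown really applies to every element of the algebra generated in the spectral triple, including iterated commutators with $\what D$ and composition with non-integral complex powers of $|\what D|$. One needs, on the one hand, stability of $\Gamma$-invariance and of the foliated pseudodifferential class under these operations, which is built into \cite{BH2016a}; and, on the other, sufficient off-diagonal decay of the Schwartz kernels of $b\,e^{-t\what D^2}$, uniformly in small $t$, to justify integrating over a single fundamental domain term by term in the asymptotic expansion. Controlling this decay is the technical heart of the argument, and it is here that the bounded geometry of $(\hM,\what\maF)$ and the Getzler-type rescaling underlying Kordyukov's expansion both enter.
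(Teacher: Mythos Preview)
The paper does not actually prove this statement; it is quoted as a special case (the case $\what\maF'=\what\maF$) of the preceding general theorem, which is itself the main result of \cite{BH2016b}. The entire content of the statement, including the assertion about the simple dimension spectrum, is therefore attributed to \cite{BH2016b}; the paper offers no separate argument for the dimension spectrum beyond the sentence ``this theorem becomes\ldots''.

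Your first paragraph is thus exactly right and is all the paper itself says. Your further remark that the dimension-spectrum clause is ``genuinely new content'' not visible in the general theorem as restated here is accurate as a matter of reading this paper, but the intended reduction is simply that \cite{BH2016b} already establishes the simple dimension spectrum in the Riemannian case; the paper does not view it as requiring an independent proof.

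That said, your proposed route---Atiyah pushdown of the semi-finite heat trace to a fundamental domain, locality of the small-$t$ asymptotics to identify the expansion with that of $\Tr(b_0\,e^{-tD^2})$ on $M$, and then invoking \cite{K97}---is a sound and natural alternative. It is very much in the spirit of what the present paper does later for the Connes--Chern characters (Proposition~\ref{traceElem2} and Proposition~\ref{ASresult}), where exactly this kind of fundamental-domain comparison is carried out. The technical point you flag, uniform off-diagonal control of the kernels of $b\,e^{-t\what D^2}$ in bounded geometry, is indeed the substantive one; it is handled in \cite{BH2016b}, and a self-contained treatment along your lines would have to reproduce a version of that estimate. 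One small caution: the identification $\tau(b\,e^{-t\what D^2})\sim\Tr(b_0\,e^{-tD^2})$ is an equality of \emph{asymptotic expansions} as $t\to 0$, not of the traces themselves---the heat kernel on $\hM$ differs from the lifted heat kernel on $M$ by contributions from nontrivial deck transformations, which are exponentially small but nonzero for $t>0$.
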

}}

For technical reasons which will be explained later, we now assume that the normal bundle $\nu$ is integrable, that is $M$ has a $q$ dimensional foliation which is transverse to $F$.   This foliation is denoted $F_{\pit}$.   Since $F$ is transversely spin,  $F_{\pit}$ is a spin foliation.

Now suppose again that  $\rho: \what{M} \to M$ is a Galois covering, and  consider $M \subset \what{M}$ as a fundamental domain.  Set $\what{F} = \rho^{-1}F$,  with leaves $\what{L}$,  $\what{\nu} = \rho^*\nu$, and $\what{\nu}^* = \rho^*\nu^*$.   We also have the corresponding pulled back objects $\what{\maS}_{\nu} \otimes \what{E}$, $\what{\maH} = L^2(\what{M}, \what{\cS}_{\nu} \otimes \what{E})$, $\what{D}$, $\what{\cG}$, etc.  

Note that in general the inverse image of a leaf $L$ of $F$ may consist of a number of leaves of $\what{F}$, and that for each of those leaves $\rho:\what{L} \to L$ is a connected covering of $L$.
Since the simply connected covering $\wtit{\what{L}} \to \what{L}$ of the covering $\what{L} \to L$ is just the simply connected covering  $\wtit{L} \to L$, the following  is obvious.
\begin{lemma}
Given $\gamma \in \cG_{\rho(\what{x})}$, there exists a unique $\what{\gamma} \in \what{\cG}_{\what{x}}$ so that 
$\rho \circ \what{\gamma}  = \gamma$.
\end{lemma}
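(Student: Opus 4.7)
The plan is to unfold the definition of the homotopy groupoids and reduce the statement to the identification of universal covers flagged in the sentence preceding the lemma. Recall that a point of $\cG_{\rho(\what{x})}$ is a leafwise homotopy class, rel endpoints, of smooth paths $\sigma:[0,1]\to L_{\rho(\what{x})}$ with $\sigma(0)=\rho(\what{x})$; equivalently, the source fiber $\cG_{\rho(\what{x})}$ is canonically identified with the universal cover $\wtit{L}_{\rho(\what{x})}$ of the leaf based at $\rho(\what{x})$. The analogous identification $\what{\cG}_{\what{x}}\simeq \wtit{\what{L}}_{\what{x}}$ holds on the cover. Since the lemma just above says that the simply connected cover $\wtit{\what{L}}\to\what{L}$ coincides with $\wtit{L}\to L$, both source fibers are canonically the same set, and the asserted bijection $\gamma\mapsto\what{\gamma}$ is essentially that identification.

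To produce $\what{\gamma}$ explicitly, I would pick any smooth path $\sigma$ in $L_{\rho(\what{x})}$ representing $\gamma$ and apply the unique path lifting property of the covering map $\rho:\what{M}\to M$ to obtain $\what{\sigma}:[0,1]\to\what{M}$ with $\what{\sigma}(0)=\what{x}$ and $\rho\circ\what{\sigma}=\sigma$. Because $\what{F}=\rho^{-1}F$, the lifted path $\what{\sigma}$ stays inside the leaf $\what{L}_{\what{x}}$ of $\what{F}$ (a connected component of $\rho^{-1}L_{\rho(\what{x})}$ through $\what{x}$), so $\what{\sigma}$ defines a class $\what{\gamma}:=[\what{\sigma}]\in\what{\cG}_{\what{x}}$, and $\rho\circ\what{\gamma}=\gamma$ by construction.

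For well-definedness and uniqueness, let $\sigma'$ be another representative of $\gamma$ and let $H:[0,1]^{2}\to L_{\rho(\what{x})}$ be a leafwise homotopy from $\sigma$ to $\sigma'$ rel endpoints. Homotopy lifting for $\rho$ gives a unique continuous lift $\what{H}$ with $\what{H}(s,0)=\what{x}$; the path $s\mapsto\what{H}(s,1)$ takes values in the discrete fiber $\rho^{-1}(r(\gamma))$ and is therefore constant, so $\what{H}$ is a homotopy rel endpoints between the lifts of $\sigma$ and $\sigma'$, and $\what{\gamma}$ is independent of the chosen representative. If $\what{\gamma}_{1},\what{\gamma}_{2}\in\what{\cG}_{\what{x}}$ both satisfy $\rho\circ\what{\gamma}_{i}=\gamma$, picking representative paths and applying uniqueness of path lifting starting at $\what{x}$ shows that they share a common representative, so $\what{\gamma}_{1}=\what{\gamma}_{2}$. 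The only point requiring any attention is the constancy of $s\mapsto\what{H}(s,1)$, but this is standard since $\rho$ is a covering and the fibers are discrete.
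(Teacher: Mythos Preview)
Your proof is correct and your first paragraph is exactly the paper's argument: the paper simply declares the lemma ``obvious'' after noting that the universal cover of $\what{L}$ coincides with that of $L$, which is precisely your identification $\what{\cG}_{\what{x}}\simeq\wtit{\what{L}}_{\what{x}}\simeq\wtit{L}_{\rho(\what{x})}\simeq\cG_{\rho(\what{x})}$. Your explicit path- and homotopy-lifting verification in the remaining paragraphs is extra detail the paper does not supply; one small imprecision is the phrase ``they share a common representative'' in the uniqueness step---what the lifting argument actually gives is that the two representative paths are homotopic rel endpoints (via the lifted homotopy, using uniqueness of path lifting to identify $\what{H}(\cdot,0)$ and $\what{H}(\cdot,1)$ with the given lifts), not that they are literally the same path.
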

This lemma is not true in general if we use holonomy groupoids instead of homotopy groupoids.
 
\begin{definition}\label{action}Let  $a \in C^{\infty}_c(\cG)$  act on $\what{u} \in \what{\maH}$ by 
$$
a(\what{u})(\what{x}) \,\, = \,\, \int_{\cG_{\rho(\what{x})}} a(\gamma^{-1}) h_{\what{\gamma}^{-1}}(\what{u}(\what{r}(\what{\gamma})))d\gamma  \,\, = \,\, 
\int_{\what{\cG}_{\what{x}}} a((\rho \circ \what{\gamma})^{-1}) h_{\what{\gamma}^{-1}}(\what{u}(\what{r}(\what{\gamma})))d\what{\gamma}.
$$
\end{definition}

\begin{definition}
The  {{Atiyah-Connes}} spectral triple associated to a (possibly non-compact) Galois foliation cover $\rho:(\what{M}, \what{F})\to (M,F)$, with covering group $\Gamma$ as above, is $(C^{\infty}_c(\cG),  B(\what{\maH})^{\Gamma},  \what{D})$.   The trace is the Atiyah-trace which yields the super trace, denoted $\tau_s$, constructed using the Berezin integral, but restricted to the fundamental domain $M \subset \what{M}$. 
\end{definition} 

Note the following.

1)  Since we are using the homotopy groupoid, we want the algebra of smooth functions on the homotopy groupoid  $\what{\maG}$ of $\what{F}$, which are $\Gamma$-invariant and $\Gamma$-compactly supported. This is precisely the algebra downstairs, that is $C_c^\infty(\cG)$.  

 2) The von Neumann algebra $B(\what{\maH})^{\Gamma}$ is the algebra used by  Atiyah in the $L^2$ covering index theorem.

 3) The Dirac operator $\what{D}$  on $\what{M}$ is defined with respect to $\Gamma$-invariant data and hence is $\Gamma$-invariant.  

4)  For a compact covering, this is a slight modification of the Type I Connes-Moscovici spectral triple above.


Denote by $d_{\nu}$ the transverse de Rham differential for the foliated manifold $(\cG,F_s)$, \cite{T97}, and by $dx_F$ the volume form along the leaves of $F$.
Our main result is the following.

\begin{theorem}\label{CCchar}
Let $(\what{M}, \what{F})$ be a  covering foliation of $(M,F)$.  Assume that  $F$  has even codimension, is transversely spin, and that  its normal bundle $\nu$ is integrable.   Let $a_0, \ldots, a_k \in C_c^\infty (\cG)$.  The  sequences $(\phi_{k})_{k\geq 0}$, and $(\what{\phi}_{k})_{k\geq 0}$, $k$ even, where
$$
\phi_{k} (a_0, \ldots, a_k) \,\, = \,\,    \frac{1}{k!} \int_{M}  \sum_{\gamma \in \cG_x^x} (a_0  d_{\nu} a_1  \cdots d_{\nu} a_k)(\gamma^{-1})  \wedge \ch(E) \wedge \what{A} (\nu^*) \wedge  dx_F,
$$
and
$$
\what{\phi}_{k} (a_0, \ldots, a_k) \,\, = \,\,    \frac{1}{k!} \int_{M \subset \what{M}} 
 \sum_{\what{\gamma} \in \what{\cG}_x^x} (a_0  d_{\nu} a_1  \cdots d_{\nu} a_k)((\rho \circ\what{\gamma})^{-1})    
 \wedge \ch(\what{E}) \wedge {\what A} (\what{\nu}^*) \wedge  dx_F
$$
are cocycles in the $(b,B)$ bicomplex for $C_c^\infty (\cG)$.

The cohomology classes of  $(\phi_k)$ and $(\what{\phi}_k)$ are respectively the Connes-Chern characters of $(C_c^\infty (\cG), \maH,D)$ and   $(C_c^\infty (\cG), B(\what{\maH})^{\Gamma}, \what{D})$.  These classes induce the same map on the image of the maximal Baum-Connes assembly map in $K_*(C^*_{\max}(\cG))$, \cite{BC00}.  That is,  we have an Atiyah $L^2$ covering index theorem for foliation coverings.
\end{theorem}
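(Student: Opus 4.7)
\medskip

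\noindent\textbf{Proof plan.} My approach would proceed in three stages: (i) verify the cocycle property of $\phi_k$ and $\what\phi_k$ in the $(b,B)$-bicomplex; (ii) identify them with the Connes-Chern characters via the Connes-Moscovici local index formula, using the symbol calculus of \cite{BH2016a}; (iii) prove the agreement on $\Ind_{\max}(K_*^{\mathrm{top}})$ through a Baum-Connes pairing argument.

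For (i), one checks the relations $b\phi_k + B\phi_{k+2} = 0$ directly from the formula. The Leibniz rule for the transverse de Rham differential $d_\nu$ on $(\cG, F_s)$, together with the graded-commutativity of the wedge with $\ch(E)\wedge\what{A}(\nu^*)$ (which is basic, hence pulled back from a transversal and annihilated by leafwise contractions), reduces the verification to a cyclic telescoping argument in the convolution algebra $C_c^\infty(\cG)$. The integration against $dx_F$ over $M$ (resp.\ the fundamental domain $M\subset\hM$) plays the role of a trace, and the sum over isotropy loops $\gamma\in\cG_x^x$ (resp.\ $\what\gamma\in\what\cG_x^x$) ensures that the convolution products are evaluated at the identity section, yielding the cyclic cocycle identities.

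For (ii), the main input is the Connes-Moscovici residue cocycle. Since both spectral triples have simple dimension spectrum contained in $\{k\in\N\mid k\le q\}$ (by Kordyukov's theorem in the Type I case and by the extension proven in \cite{BH2016b} for the Type II case), the CM residue formula gives the Connes-Chern character in terms of residues $\tau_s(\cdots|D|^{-2k-|\alpha|})_{s=0}$, and analogously with $\tau$ on the Atiyah algebra. The critical step is to compute these residues using the foliation symbol calculus of \cite{BH2016a}: the transverse principal symbol of $D^2$ is $|\xi_\nu|^2$ on $\nu^*$, and the Getzler-rescaling argument adapted to the foliated setting replaces the Clifford symbols by exterior multiplications, extracting the Berezin integral of $e^{-R^{\nu^*}/2\pi i}\wedge\ch(E)$, which is exactly $\what{A}(\nu^*)\wedge\ch(E)$. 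The reason the \emph{same proof} works in both settings is that the Atiyah trace, when restricted to the fundamental domain, has identical local asymptotic expansions to the operator trace, because both are computed from the diagonal of the heat kernel of a $\Gamma$-invariant operator. The hard part here is controlling the interplay between the leafwise direction (where the operator is not elliptic) and the transverse direction (where residues are extracted); this is where the assumption that $\nu$ is integrable enters, since it allows the symbol calculus to split cleanly along $TF$ and $\nu$.

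For (iii), pair the characters against a class $[x]=\mu([y])\in\Ind_{\max}(K_*^{\mathrm{top}}(B\cG))$. Using the fact that the maximal Baum-Connes map factors through the Connes-Skandalis index map and produces K-theory classes representable by smooth idempotents/unitaries supported near diagonal elements of $\cG$, one reduces the equality of pairings to the coincidence of the local formulas over $M$: for each $\gamma\in\cG_x^x$ arising from a Baum-Connes class, the $\Gamma$-equivariant lift has a unique $\what\gamma$, and the characteristic form $\ch(\what{E})\wedge\what{A}(\what\nu^*)$ on $\hM$ descends to $\ch(E)\wedge\what{A}(\nu^*)$ on $M$. Summing over all $\what\gamma\in\what\cG_x^x$ with $(\rho\circ\what\gamma)^{-1}$ in the support recovers the downstairs sum over $\cG_x^x$. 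The main obstacle is to justify rigorously that the maximal assembly map has image representable by such finitely supported data, so that both sums reduce to the \emph{same} finite sum over loops; once this is established the Atiyah $L^2$-index identity follows immediately by subtracting the two cocycle pairings.
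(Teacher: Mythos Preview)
Your overall strategy matches the paper's in its essentials, particularly in using the Connes--Moscovici residue cocycle together with a Getzler rescaling via the transverse symbol calculus for step (ii), and a support-near-units argument for step (iii). There are, however, three points where your route diverges from the paper's and where your sketch is either redundant or imprecise.

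\medskip
\noindent\textbf{Step (i) is unnecessary.} The paper never verifies the $(b,B)$ cocycle identities directly. The cocycle property is obtained for free once $(\phi_k)$ is identified with the Connes--Moscovici residue cocycle of the spectral triple, which is known to be a $(b,B)$ cocycle on general grounds. Direct verification is possible but superfluous once (ii) is carried out.

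\medskip
\noindent\textbf{Step (ii) omits the key metric simplification.} The paper's residue computation relies on a structural simplification you have not mentioned: by Dominguez one may choose the bundle-like metric so that the mean curvature covector $\mu$ is basic, and by \'Alvarez L\'opez the basic component of $\mu$ is closed, whence $\Xi = d_\nu c(\mu) = 0$. With this metric the error terms $\wtit c(\mu)$ and $\wtit\Xi$ vanish, so that $a^{k,0}(tD)_{1/t}$ is exactly $a_0\, d_\nu a_1 \cdots d_\nu a_k$, and for $\ell \neq (0,\ldots,0)$ each summand of $a^{k,\ell}(tD)_{1/t}$ factors through a transverse asymptotic differential operator with zero leading symbol, forcing the corresponding residue to vanish. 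Your outline does not address how the $\ell \neq 0$ terms in the CM formula are disposed of; without the Dominguez metric this is considerably more delicate, and you should make this step explicit.

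\medskip
\noindent\textbf{Step (iii) has a subtle error in the lifting argument.} A loop $\gamma \in \cG_x^x$ does not in general lift to a loop in $\what\cG_x^x$: its lift $\what\gamma$ starts at $x$ but may end at a different point of the fibre $\rho^{-1}(\rho(x))$. Thus there is no natural bijection between the two isotropy sums, and your claim that the upstairs sum ``recovers the downstairs sum over $\cG_x^x$'' is not literally correct. The paper's argument is simpler and avoids this: since classes in the image of the maximal assembly map are representable by projections in $M_\infty(\wtit C_c^\infty(\cG))$ supported in an arbitrarily small neighbourhood of the units $M \subset \cG$, the supports of $e(d_\nu e)^k$ are equally close to the units, so \emph{both} sums collapse to the single constant loop at $x$. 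The integrands then agree because $\ch(\what E)$ and $\what A(\what\nu^*)$ are pullbacks of $\ch(E)$ and $\what A(\nu^*)$.
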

{{\begin{remark}\label{BC-Free}
The maximal assembly map referred to in this paper is the map induced by leafwise index maps, see for instance \cite{ConnesBook}.  If $\cG$ is torsion-free, this map  is conjectured to be an isomorphism, see  \cite{BC00}.  When $\cG$ has torsion, it has a modified definition, and it is not an isomorphism in general.  See again \cite{BC00} and also \cite{TuHyper}. 
\end{remark}
}}

Note that the sums $\sum_{\cG_x^x}$ and $\sum_{\what{\cG}_x^x} $ are all finite since $a_0  d_{\nu} a_1  \cdots d_{\nu} a_k$ has compact support.
Note also that the  product in $a_0   d_{\nu} a_1 \cdots d_{\nu} a_k$ is the convolution wedge product on the groupoid, while the wedges are the simple pointwise wedge product.  Finally note that the periodic cyclic cohomology classes of $(\phi_k)$ and $(\what{\phi}_k)$ do not depend on either the metric on $M$ (so also on $\what{M}$), nor the choice of normal bundle $\nu$.   

\section{Symbols and the transverse symbol space}

To begin,  we construct a generalized exponential function $\whexp_{x}:TM_x \to M$.    Suppose $X = (X_{\nu}, X_F) \in \nu_x \oplus TF_x $.  The leaves  of $F$  and $F_{\pit}$ through $x$ are denoted $L_x$  and $L^{\pit}_x$, and their leafwise exponentials are denoted $\exp^F$ and $\exp^{\pit}$.
Denote parallel translation from $x$ to $\exp^F(X_F)$ along $\exp^F(tX_F)$, $t \in [0,1]$, in $L_x$  of the leafwise flat bundle $\nu$ by $h_{\exp^F(X_F)}$.  Set 
$$
\whexp(X) \,\, = \,\, \exp^{\pit}(h_{\exp^F(X_F)}(X_{\nu})).
$$
What we are doing is  exponentiating first in the leaf direction using the usual exponential function for $L_x$,  and then exponentiating in the transverse direction using  the usual exponential for $L^{\pit}_{\exp^F(X_F)}$.  

\begin{remark}\label{expremark}
The map $\whexp$ has the essential properties of the usual exponential function:  it is a local diffeomorphism from $0 \in TM_x$ to a neighborhood of $x$, whose differential at $0$ is the identity.   For $X \in \nu_x$, $\whexp(X)$ is a geodesic in the leaf of $F_{\pit}$ through $x$.

There are two natural foliations on $TM_x$, namely the planes parallel to $TF_x$ and $\nu_x$ respectively.  Since $F$ is Riemannian, $\whexp$ maps these foliations onto the foliations $F$ and $F_{\pit}$, respectively.

The results of \cite{BH2016a} still hold for the transverse symbol space $SC^{\ell}_{\pit} (M, E)$ defined below  if $\what{\exp}$ is used in place of the usual exponential function.  
\end{remark}

Choose a smooth bump function $\alpha$ on $M \times M$ which is  supported in a neighborhood of the diagonal, and equals one on a neighborhood of the diagonal.  We require that the support of $\alpha$ is close enough to the diagonal that $(\pi_M, \wexp)^{-1}:\Supp(\alpha) \to TM$ is a diffeomorphism onto the component of $(\pi_M, \wexp)^{-1}(\Supp(\alpha))$ which contains the zero section, where  $\pi_M:TM \to M$ is the projection. Suppose that $x' = \wexp(X)$ where $X \in TM_x$.  Given $u_x \in (\cS_{\nu} \otimes E)_x$, denote by $\mathcal{T}_{x,x'}(u_x)$ the parallel translation of $u_x$ along the radial line  $t \to \wexp(tX)$ from $x$ to $x'$.
\begin{definition}
Given an operator $P$ on sections of $\cS_{\nu} \otimes E$, its symbol $\varsigma (P)$ is defined as follows. Let $x \in M$, $\xi  \in  T^*M_x$, and $u_x \in (\cS_{\nu} \otimes E)_x$.    Set 
$$
\varsigma(P)(x,\xi)(u_x) \,\, = \,\,  
P \Big(x' \mapsto e^{i\langle \wexp^{-1}_x(x'), \xi  \rangle}
\alpha(x,x')\mathcal{T}_{x,x'}(u_x)\Big) \, |_{x' = x}.
$$
Similarly for $\what{P}$ acting on sections of $\what{\cS}_{\nu} \otimes \what{E}$.
\end{definition}
If we write $T^*M = \nu^* \oplus T^*F$, then $\xi \in T^*M$ may be written as $\xi = (\eta,\zeta)$, and we may also write $\varsigma(P)(x,\xi) = \varsigma(P)(x, \eta, \zeta)$.  
Denote by $\pi:\nu^* \to M$ the projection.

\begin{definition} \label{symbdef}
The symbol space $S^{\ell}_{\pit} (M, E)$ consists of all $p(x,\eta) \in  C^{\infty}(\nu^*, \pi^*(\End (\cS_{\nu} \otimes E)))$  so that, for any multi-indices $\alpha$ and $\beta$,  there is a constant $C_{\alpha,\beta} > 0$,  such that
$$
||  \,   \pa^{\alpha}_{\eta}   \pa^{\beta}_{x}p(x,\eta) \, || \,\, \leq \,\, 
C_{\alpha,\beta}(1 +  |\eta|)^{\ell- |\alpha|}.
$$
The topology on $S^{\ell}_{\pit} (M, E)$  given by the semi-norms  
$$
\rho_{\alpha,\beta}  \,\, = \,\  \inf  \big{\{}  C_{\alpha,\beta} \, | \,\,   ||  \,   \pa^{\alpha}_{\eta}   \pa^{\beta}_x 
p(x,\eta) \, || \,\, \leq \,\, 
C_{\alpha,\beta}(1 +  |\eta|)^{\ell- |\alpha|}  \big{\}} .
$$
\end{definition}

If we replace the variable $\eta$ with the variable $\sigma$,  this is a subspace of the space $S^{0,\ell}(M,E)$  of \cite{BH2016a}, for the \underline{transverse} \underline{foliation} $F_{\pit}$.   We shall consider it so, while retaining the use of the variable $\eta$.  

\medskip
Of course, $\pa^{\alpha}_{\eta}  \pa^{\beta}_{x}p(x,\eta)$ only makes sense if we specify local coordinates.  We will use the so-called ``normal coordinates" on $M$.  Normal coordinates at a point $x \in M$ are given by choosing a neighborhood $U_x$ of $0 \in TM_x$ on which $\wexp:TM_x \to M$ is a diffeomorphism, and orthonormal bases of $\nu_x$ and $TF_x$, which define coordinates $(x_1,\ldots,x_n)$ on $\nu_x \oplus TF_x = TM_x$.   This then defines coordinates (also denoted $(x_1,\ldots,x_n)$) in the neighborhood $\wexp(U_x)$ of $x$.  In addition, it also induces coordinates on $\nu^*_x$, $T^*F_x$, and $T^*M_x$.  

Denote by $\wedge^* \nu^*$ the complexified Grassmann algebra bundle. Then
$$
C^{\infty}(\nu^*, \pi^*(\End(\cS_{\nu} \otimes E)))  \,\, \cong \,\,   
C^{\infty}(\nu^*,  \pi^*(\wedge^* \nu^* \otimes  \End( E))),
$$
as $\wedge^* \nu^*  \cong {\rm Cliff}(\nu^*) $,  where ${\rm Cliff}(\nu^*)$ is the Clifford algebra.  Since $q$ is even,  $\End(\cS_{\nu})  \cong  {\rm Cliff}(\nu^*)$.  The reader should note carefully that when we represent endomorphisms of $\cS_{\nu}$ as elements of $C^{\infty}(\wedge^* \nu^*)$ and we compose them, the operation we use is Clifford multiplication and not wedge product.  Note also that we use the convention $\omega \cdot \omega =  -\langle \omega, \omega \rangle =-||\omega||^2$ for co-vectors. 

Following \cite{Getz}, we treat Clifford multiplication by a {\em normal} $k$-co-vector as a differential operator of order $k$.  Thus if  $p \in  C^{\infty}( \nu^*, \pi^*(\wedge^k \nu^* \otimes \End (E))) \cap S^{\ell-k}_{\pit}(M, E)$, we say $p$ has grading $\ell$.  
\begin{definition}
The  transverse  symbol space $SC^{\ell}_{\pit} (M, E)$ is 
$$
SC^{\ell}_{\pit} (M, E) \,\, = \,\,   \sum_{k=0}^q 
C^{\infty}(\nu^*,  \pi^*(\wedge^k \nu^* \otimes  \End(E)))    \cap   S^{\ell-k}_{\pit} (M, E) .
$$
Set  $SC^{\infty}_{\pit}(M, E)=\bigcup_{\ell} SC^{\ell}_{\pit} (M, E)$ and 
$SC^{-\infty}_{\pit}(M, E)=\bigcap_{\ell} SC^{\ell}_{\pit} (M, E)$.
\end{definition}

In the notation  of \cite{BH2016a}, $SC^{\ell}_{\pit} (M, E)$ is a subspace of the space $SC^{0,\ell}(M,E)$ for the foliation $F_{\pit}$.  Thus the results of  \cite{BH2016a}  apply to  $SC^{\ell}_{\pit} (M, E)$.  

\begin{remark}\label{quant}
Recall the quantization map $\theta^{\alpha}$ of \cite{BH2016a}, which involves an exponential function.   If we use  $\whexp$ in defining  $\theta^{\alpha}$, and $p \in SC^{\infty}_{\pit}(M,E)$ and $u$ is a section of $\cS_{\nu} \otimes E$, then $(\theta^{\alpha}(p)(u))(x)$ depends only on $u$ restricted to $L^{\pit}_x$.  This is not true in general if we use the usual exponential function.   In particular,
$$
\theta^{\alpha}(p)(u)(x) \,\, = \,\, (2 \pi )^{-q}   \int_{\nu_x \times \nu^*_x} \hspace{-0.5cm}
e^{-i\langle X,\eta  \rangle}p(x,\eta) \alpha(x,\wexp_x(X))   \mathcal{T}^{-1}_{x,\wexp_x(X)}(u(\wexp_x(X)))\,   dX d\eta.
$$
\end{remark}

\begin{definition}
A family $p(t) \in SC^{\ell}_{\pit}(M,E)$ is an asymptotic symbol if there are symbols $p_k \in SC^{\ell-k}_{\pit}(M,E)$,  so that the following asymptotic expansion holds as $t \to 0$,
$$
p(t) \,\, \sim \,\, \sum_{k=0}^{\infty} t^{k} p_k.
$$
The leading symbol of $p(t)$ is $p_0$.
\end{definition}

Note that  $p(t) \,\, \sim \,\, \sum_{k=0}^{\infty} t^{k} p_k$ means that given any $N > 0$, 
$
\lim_{t \to 0} t^{-N}\Big( p(t) \,\, - \,\, \sum_{k=0}^{N} t^{k} p_k \Big) \,\, = \,\, 0 
$
in  $SC^{\ell - N-1}_{\pit}(M,E)$.   It does not imply that $\sum_{k=0}^{\infty} t^{k} p_k$ converges.

If the family $p(t) \in C^{\infty}( \nu^*, \pi^*(\wedge^k \nu^* \otimes\End (E))) \cap S^{\infty}_{\pit} (M, E)$, then for $t > 0$, we set 
$$
p(\eta,t)_t  \,\, = \,\, t^{k} p(t\eta,t),
$$
and  extend to all of $SC^{\infty}_{\pit} (M, E)$ by linearity.

\begin{definition}
A transverse asymptotic pseudodifferential operator ($\pit$A$\Psi$DO) is a family of operators $P_t$ on sections of $\cS_{\nu} \otimes E$ so that there is an asymptotic symbol $p(t) \in SC^{\ell}_{\pit}(M,E)$, with $P_t \, \sim \, \theta^{\alpha}(p(t)_t)$.

If $p(t) \,\, \sim \,\, \sum_{k=0}^{\infty} t^{k} p_k$, 
the leading symbol of $P_t$ is the symbol $p_0$.
\end{definition}

Recall that   $P_t \, \sim \,  Q_t$ if for  all $N \geq 0$, and all $s,k$,
$
\lim_{t \to 0} t^{-N}|| P_t  - Q_t||_{s,k} \,\, = \,\, 0,
$ 
where, $|| \cdot ||_{s,k}$ is the norm of an operator   from the usual $s$ Sobolev space associated to  $\cS_{\nu} \otimes E$  to the usual $k$ Sobolev space. 

Of course, as all these constructions are local, we may also make them for $\what{M}$, $\what{F}$ and $\what{E}$.

\section{Preliminary Results}\label{prelim}

Our calculations here are local so work equally well for both $(M,F)$ and $(\what{M},\what{F})$.  For simplicity of notation, we will work on $(M,F)$.  

For  $a \in C^{\infty}_c(\cG; \wedge^k  r^*(\nu^* ))$, set $a_t = t^k a$.  Below we will encounter expressions of the form $a \maD$, where $\maD$ is a transverse differential operator of order $\ell$ exacltly, that is it will differentiate exactly $\ell$ times, no more and no fewer.  For such operators, we set $(a \maD)_t = t^{k+\ell}a \maD$.  The reader should note that with this definition we have 
$\varsigma(\maD)_t =   t^{\ell} \varsigma(\maD)$, since $\varsigma(\maD)$ will be a polynomial in $\eta$ homogenous of degree $\ell$.

Suppose $a_0, \cdots , a_k \in C^{\infty}_c(\cG)$.  To prove Theorem \ref{CCchar}, we need  certain facts about   
$$
t^{k+2|\ell|}\big(a_0(\delta a_1)^{(\ell_1)} \cdots (\delta a_k)^{(\ell_k)}\big)_{1/t} \quad \text{    and     }  \quad \varsigma(e^{-t^2 D^2})_{1/t}.
$$
Here,  $\ell = (\ell_1,...,\ell_k) \in \N^k$,  $ \delta a_j = [ D,a_j]$,  $ \mathfrak{D}(a_j) = [ D^2,a_j]$, and $(\delta a_j)^{(\ell_j)} =  \mathfrak{D}^{\ell_j}(\delta a_j)$.     If we set 
$$
a^{k,\ell}(D) = a_0(\delta a_1)^{(\ell_1)} \cdots (\delta a_k)^{(\ell_k)},
$$
then,   
$$
t^{k+2|\ell|}a_0(\delta a_1)^{(\ell_1)} \cdots (\delta a_k)^{(\ell_k)}  \,\, = \,\,
a^{k,\ell}(tD).
$$

\medskip
\begin{Equation}
\large{\text{Results on} $\delta a = [D,a]$. } 
\end{Equation} 

First we recall some notation.  For $x \in M$, $\cG_x = s^{-1}(x)$, that is homotopy equivalence classes of paths, with end points fixed, in the leaf $L_x$ starting at $x$.  The holonomy along an element $\gamma \in \cG$ is denoted $h_{\gamma}$.   So $h_{\gamma}$ transports objects at $s(\gamma)$ to $r(\gamma)$ along the path $\gamma$ in the leaf $L_{s(\gamma)}$.   An element $a \in C^{\infty}_c(\cG; \wedge^k r^*(\nu^* ))$ acts on sections of $\cS_{\nu} \otimes E$ by
$$
a(u)(x) \,\, = \,\,  \int_{\cG_x} a(\gamma^{-1}) h_{\gamma^{-1}} u( r(\gamma))
d\gamma.
$$

\medskip
The operator $\wtit{D}$ acts on $a$ as follows.  Choose double foliation coordinate charts $U,(y,z)$ and $V,(y',z')$ respectively of $s(\gamma)$ and $r(\gamma)$, where $y$ and $y'$ are transverse coordinates and $z$ and $z'$ are tangential coordinates.  The neighborhood $(U, \gamma, V)$ of $\gamma$ consists of all leafwise paths which start in $U$, end in $V$, and are parallel with $\gamma$.  See \cite{HL} for more details.  Coordinates on $(U, \gamma, V)$  are given by $(y,z,z')$. In essence, we identify the transverse coordinates using the holonomy  $h_{\gamma}$ of $\gamma$.  The operator $\wtit{D}$ is preserved by $h_{\gamma}$, since $h_{\gamma}$ preserves the metrics on $\nu$ and $\nu^*$ and the connection $\nabla^{\nu}$.   In addition,  $\wtit{D}$ is completely determined by its restriction to the leaves of the transverse foliation.  So $\wtit{D}$ does not depend on the tangential coordinates at all, and we may assume that $\gamma$, so also $\gamma^{-1} \in (V,\gamma^{-1}, U)$, depends only on the coordinate $y$, that is we may view $a$ as a section of $\wedge^k \nu^*$ defined on the transverse leaf through $x \in U$.  Then $\wtit{D}(a)(\gamma^{-1})$ is well defined, and by definition equals $d_{\nu}a(\gamma^{-1})$. 
 
 Recall that $c(\mu)$ is Clifford multiplication by the mean curvature vector field of $F$ (thought of as a co-vector), and set $\wtit{c}(\gamma) = h_{\gamma}(c(\mu)(s(\gamma))) -  c({\mu})(r(\gamma))$. Then
\medskip
\begin{Equation}
\hspace{0.5cm}   $[tD, a]_{1/t}\,\, = \,\ d_{\nu}a + \frac{1}{2}   \wtit{c}(\mu)  a.$ 
\end{Equation}

\medskip
We have
$$
[tD, a] (u)(x) \,\, = \,\, 
t D_x \int_{\cG_x} a(\gamma^{-1}) h_{\gamma^{-1}} u(r(\gamma))d\gamma \,\, - \,\,
t\int_{\cG_x}  a(\gamma^{-1})  h_{\gamma^{-1}} ((Du)(r(\gamma)))d\gamma.
$$

Now $D = \wtit{D} -\frac{1}{2} c({\mu})$, and since $h_{\gamma^{-1}}$ preserves the metrics on $\nu$ and $\nu^*$ and the connection $\nabla^{\nu}$, we have  $[\wtit{D}, h_{\gamma^{-1}}]  = 0$.  Thus 
$$
[tD, a] (u)(x) \,\, = \,\, 
t\int_{\cG_x }d_{\nu}a(\gamma^{-1})
h_{\gamma^{-1}}u(r(\gamma))  d\gamma   \,\,  +  \,\, 
\frac{t}{2} \int_{\cG_x} a(\gamma^{-1})
( h_{\gamma^{-1}}c(\mu)(r(\gamma)) -c(\mu)(s(\gamma))  )               
h_{\gamma^{-1}}u(r(\gamma))d\gamma. 
$$

Note that 
$$
h_{\gamma^{-1}}c(\mu)(r(\gamma) ) - c(\mu)(s(\gamma)) \,\, = \,\,
h_{\gamma^{-1}}c(\mu)(s(\gamma^{-1})) -  c({\mu})(r(\gamma^{-1}))   \,\, = \,\,\wtit{c}(\mu)(\gamma^{-1}),
$$  
so
$$
 [tD, a] \,\, = \,\,  td_{\nu}a + \frac{t}{2} \wtit{c}(\mu) a ,
$$
and
$$
 [tD, a]_{1/t} \,\, = \,\ d_{\nu}a + \frac{1}{2} \wtit{c}(\mu) a,
$$
where the  normal one form parts act by Clifford multiplication. 

\medskip
\begin{Equation}
\large{\text{Results on} $t^2D^2$.} 
\end{Equation} 

Choose dual orthonormal bases, $f_1,...,f_q$ of 
$\nu^*$ and $e_1,...,e_q$ of $\nu$, which are parallel  (using $\nabla^{\nu}$) along radial lines through $x$.  From  the proof of  Proposition 3.10 of \cite{BH2016a}, we have
$$ 
D^2 \,\, = \,\,  \wtit{D} ^2   -\frac{1}{2}\wtit{D} (c(\mu))  +  \nabla_{\mu}    + \frac{1}{4} |\mu|^2, 
$$
and
$$
\wtit{D}^2 \,\, = \,\, -\sum_{j} \nabla^{\nu}_{e_j} \nabla^{\nu}_{e_j}   \,\, + \,\,  \sum_{j < k} f_j \wedge f_k \nabla^{\nu}_{[e_j,e_k]} \,\, + \,\,  \sum_{j < k} f_j \wedge f_k  \,  \Omega_E(e_j,e_k)  
\,\, + \,\,  \frac{1}{8}\kappa.
$$
The  term $\kappa$ is the scalar curvature  of the leaves of the transverse foliation $F_{\pit}$, and  $\Omega_E$ is the curvature of the bundle $E$ over $M$.   

The term $ \sum_{j < k} f_j \wedge f_k \nabla^{\nu}_{[e_j,e_k]}$ is problematic, since its symbol may have grading three, and in order to apply the results of \cite{BH2016a}, we need the terms of the symbol of $D^2$ to have  grading at most two.  This is why we assume that $\nu$ is integrable.

\begin{lemma}  If $\nu$ is integrable, $[e_j,e_k](x) = 0$,  so also $\sum_{j < k} f_j \wedge f_k \nabla^{\nu}_{[e_j,e_k]} (x) = 0$.
\end{lemma}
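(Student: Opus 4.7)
The plan is to recognize this as a standard normal-frame computation, using that the $e_j$'s are radial-parallel and that $\nabla^{\nu}$, restricted to the integrable distribution $\nu$, is torsion free.

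First I would unpack the setup: when $\nu$ is integrable, $\nu = TF_{\pit}$ and the hypothesis that $\nabla^{\nu}$ is locally the pull-back of a Levi-Civita connection on a transversal says exactly that the restriction of $\nabla^{\nu}$ to each leaf of $F_{\pit}$ agrees with the Levi-Civita connection of the induced (bundle-like) metric on that leaf. In particular this restricted connection is torsion free, so for any smooth sections $X, Y$ of $\nu$ we have
$$
[X,Y] \,\, = \,\, \nabla^{\nu}_{X} Y \,\, - \,\, \nabla^{\nu}_{Y} X.
$$

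Next I would use the construction of the $e_j$'s. They were chosen to be a $\nabla^{\nu}$-parallel orthonormal frame of $\nu$ along every radial line through $x$. By Remark \ref{expremark}, for any $v \in \nu_x$ the curve $t \mapsto \whexp_x(tv)$ is a geodesic in the leaf $L^{\pit}_x$, and its initial tangent vector is $v$. Parallel transport of the $e_j$ along this radial geodesic therefore gives $\nabla^{\nu}_{v} e_j(x) = 0$ for every $v \in \nu_x$ and every $j$. Specialising to $v = e_k(x)$ yields $\nabla^{\nu}_{e_k} e_j (x) = 0$ for all $j,k$.

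Combining the two observations gives
$$
[e_j, e_k](x) \,\, = \,\, \nabla^{\nu}_{e_j} e_k (x) \,\, - \,\, \nabla^{\nu}_{e_k} e_j (x) \,\, = \,\, 0,
$$
and then the second claim, $\sum_{j<k} f_j \wedge f_k \nabla^{\nu}_{[e_j,e_k]}(x) = 0$, is immediate because each coefficient $[e_j,e_k](x)$ vanishes. I do not expect any real obstacle here: the integrability of $\nu$ is used only to guarantee the existence of the $F_{\pit}$-leaves on which $\nabla^{\nu}$ is honestly Levi-Civita (hence torsion free), and the rest is the standard geodesic-frame argument. The only subtlety worth flagging in the write-up is that the torsion-freeness is needed solely for vectors tangent to $\nu$, which is precisely the regime in which $\nabla^{\nu}$ coincides with the Levi-Civita connection of an $F_{\pit}$-leaf.
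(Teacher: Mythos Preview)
Your argument is correct and is essentially identical to the paper's: both use that the radial-parallel frame satisfies $\nabla^{\nu}_{e_i}e_j(x)=0$, and that $\nabla^{\nu}$ restricted to a leaf of $F_{\pit}$ coincides with the (torsion-free) Levi-Civita connection of that leaf, so $[e_j,e_k](x)=\nabla^{\nu}_{e_j}e_k(x)-\nabla^{\nu}_{e_k}e_j(x)=0$. Your write-up is slightly more explicit about the role of $\whexp$ and Remark~\ref{expremark}, but the substance is the same.
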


\begin{proof}
Denote by $P_x^{\nu}$ the plaque of the foliation determined by $\nu$ through the point $x$.  The $e_j$ are tangent to $P_x^{\nu}$, since $TP_x^{\nu} = \nu\, | P_x^{\nu}$, and parallel translation by $\nabla^{\nu}$ preserves $\nu$.  Recall that  $\nabla^{\nu}$ is the pull back of the Levi-Civita connection $\nabla^P$ on $P_x^{\nu}$ under the local projection to $P_x^{\nu}$ along the leaves of $F$.  We have 
$$
0\,\, = \,\, \Big(\nabla^{\nu}_{e_i} e_j \,\, - \,\, \nabla^{\nu}_{e_j} e_i\Big)(x) \,\, = \,\, \Big(\nabla^P_{e_i} e_j \,\, - \,\, \nabla^P_{e_j} e_i\Big)(x) \,\, = \,\,[e_i,e_j](x),
$$
since $\nabla^P$ is torsion free.  
\end{proof}
Note carefully that all the remaining terms in $\wtit{D}^2$ are completely determined by their restriction to $F_{\pit}$, and are projectable, that is invariant under the holonomy.  

Recall, \cite{BH2016a}, that 
$$
\Xi \,\, = \,\,    - \sum_{i;j \neq k} f_k \wedge f_j  \otimes e_k (\langle[e_j,X_i],X_i \rangle),
$$ 
where $X_1,...,X_p$ is a local orthonormal framing of $TF$,   is the grading two part of the Clifford form $-\wtit{D}(c(\mu))$.  Now, 
$$
t^2D^2 \,\, = \,\,     t^2   \Big( \sum_{j < k} f_j \wedge f_k  \,  \Omega_E(e_j,e_k) \,\, -  \,\, \frac{1}{2}\wtit{D} (c(\mu))        \,\, + \,\, \frac{1}{8}\kappa \,\, - \,\, \sum_{j} \nabla^{\nu}_{e_j} \nabla^{\nu}_{e_j}  \,\, + \,\,  \nabla_{\mu}    + \frac{1}{4} |\mu|^2 \Big), 
$$
which we may write as 
\begin{Equation}\label{fpf}
$t^2D^2 \,\, = \,\,  t^2   \Big( \sum_{j < k} f_j \wedge f_k  \,  \Omega_E(e_j,e_k) \,\, + \,\, \frac{1}{2} \Xi  \Big)  \,\, + \,\,
t^2 \Big(   
   \frac{1}{8}\kappa \,\, - \,\, \sum_{j} \nabla^{\nu}_{e_j} \nabla^{\nu}_{e_j}  \,\, + \,\,  \nabla_{\mu}    + \frac{1}{4} |\mu|^2  \,\, - \,\,  \frac{1}{2}(\wtit{D} (c(\mu)) \,\, + \,\,  \Xi )\Big).$
\end{Equation}
The two forms occurring in the first term are acting by Clifford multiplication.  All the operators in the second term have grading at most one, in the sense that when applied to an element of $C^{\infty}_c(\cG;r^*(\wedge \nu^*))$, e.\ g. $[tD,a]$, they raise its grading by at most one, e.\ g. 
$$
[\nabla^{\nu}_{e_j} \nabla^{\nu}_{e_j},a] \,\, = \,\, \nabla^{\nu}_{e_j} (\nabla^{\nu}_{e_j}(a)) \,\, + \,\, 2\nabla^{\nu}_{e_j}(a)\nabla^{\nu}_{e_j}.  
$$
The first term on the right has grading the same as $a$, while the second has grading one more.
In particular, 
$$
[t^2\nabla^{\nu}_{e_j} \nabla^{\nu}_{e_j}, [tD,a]]_{1/t}
$$
is polynomial in $t$ and has no constant term, so $\lim_{t \to 0} [t^2\nabla^{\nu}_{e_j} \nabla^{\nu}_{e_j}, [tD,a]]_{1/t} = 0$, and the same is true for all the other operators in the second term of Equation \ref{fpf}.  Thus, when we rescale by $1/t$, the $t^2$ in the first term will disappear, but not in the second term, so that term (when applied to an element of $C^{\infty}_c(\cG;r^*(\wedge \nu^*))$) will not play  a role in the computation of the trace.   (However, the term $\sum_{j} \nabla^{\nu}_{e_j} \nabla^{\nu}_{e_j}$, or rather its symbol, will play a crucial role in that computation.)
For simplicity we write $t^2D^2$ as
$$
t^2D^2 \,\, = \,\,  \frac{t^2}{2}( \Omega_E +  \Xi) + t^2\Lambda.
$$ 

Set 
$$
\wtit{\Xi}(\gamma)  \,\, = \,\,  \Xi(r(\gamma)) \,\, - \,\, h_{\gamma} \Xi(s(\gamma)).
$$  

\medskip
\begin{Equation}
\hspace{0.5cm}  $[t^2D^2, [tD, a]]_{1/t}   \,\, = \,\,  \Big(\frac{1}{2} \wtit{\Xi}\Big)  (d_{\nu}a + \frac{1}{2}   \wtit{c}(\mu)  a) \,\, + \,\, f_1(t),$
\end{Equation}
\noindent
where $f_1$ is polynomial in all variables, and $f_1(0) = 0$.

By the comments above about $\Lambda = \frac{1}{8}\kappa - \sum_{j} \nabla^{\nu}_{e_j} \nabla^{\nu}_{e_j}  +  \nabla_{\mu} + \frac{1}{4} |\mu|^2  -  \frac{1}{2}(\wtit{D} (c(\mu)) +  \Xi)$, we have
$$
[t^2D^2, [tD, a]]_{1/t} \,\, = \,\,  \frac{1}{2}[ \Omega_E +  \Xi,  d_{\nu}a + \frac{1}{2}   \wtit{c}(\mu)  a]   \,\, + \,\, [t^2\Lambda, [D, a]]_{1/t},
$$
where the last term is polynomial in $t$ with no constant term.   Now,  $\Omega_E$ is a basic two form, that is a two form which is constant along leaves of $F$, so $[ \Omega_E,  d_{\nu}a + \frac{1}{2}   \wtit{c}(\mu)  a]= 0$. 
The argument used in the calculation of $[tD,a]$ shows that  
$$
[t^2D^2, [tD, a]]_{1/t}   \,\, = \,\,  \Big(\frac{1}{2} \wtit{\Xi}\Big)  (d_{\nu}a + \frac{1}{2}   \wtit{c}(\mu)  a) \,\, + \,\, f_1(t),
$$
where $f_1$ is polynomial in  all variables, and $f_1(0) = 0$.

A straight forward induction argument gives
\begin{Equation}
\hspace{0.5cm} $[t^2D^2,[\cdots[t^2D^2, [tD, a]]\cdots]]_{1/t}  \,\, = \,\,  \Big(\frac{1}{2}\wtit{\Xi} \Big)^{\ell}
(d_{\nu}a + \frac{1}{2}   \wtit{c}(\mu)  a) \,\, + \,\,f_{\ell}(t),$
\end{Equation}
\noindent
where there are $\ell$ copies of $t^2D^2$, and $f_{\ell}$ is polynomial in all variables, and $f_{\ell}(0) = 0$.
Thus we have 
\begin{lemma}\label{limakl}
$$
 a^{k,\ell}(tD)_{1/t}	 \,\, = \,\,    
a_0 \Big[\prod_{j=1}^k  \Big(\frac{1}{2} \wtit{\Xi}\Big)^{\ell_j}
(d_{\nu}a_j + \frac{1}{2}   \wtit{c}(\mu)  a_j)\Big] \,\, + \,\,f_{k,\ell}(t),
$$
where $f_{k,\ell}$ is polynomial in all variables, and $f_{k,\ell}(0) = 0$.
\end{lemma}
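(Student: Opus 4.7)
The plan is to reduce the lemma to the iterated single-factor identity just established (the display following ``A straight forward induction argument gives''), combined with the observation that the rescaling $(\cdot)_{1/t}$ is multiplicative with respect to the convolution product of form-valued kernels on $\cG$. First, by the definition of $a^{k,\ell}$ together with the rescaling rule $t^{k+2|\ell|}a_0(\delta a_1)^{(\ell_1)}\cdots(\delta a_k)^{(\ell_k)} = a^{k,\ell}(tD)$, we factor
$$a^{k,\ell}(tD) \;=\; a_0 \prod_{j=1}^k Q_j(t), \qquad Q_j(t) \;:=\; [t^2D^2,[\cdots[t^2D^2,[tD,a_j]]\cdots]],$$
where the $j$-th block contains exactly $\ell_j$ nested outer brackets $[t^2D^2,\,\cdot\,]$. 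By the iterated formula,
$$Q_j(t)_{1/t} \;=\; \Bigl(\tfrac{1}{2}\wtit{\Xi}\Bigr)^{\ell_j}\bigl(d_\nu a_j + \tfrac{1}{2}\wtit{c}(\mu) a_j\bigr) \;+\; f_{\ell_j}(t),$$
with $f_{\ell_j}$ polynomial in $t$ (and in all other variables) and $f_{\ell_j}(0)=0$.

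Next I would verify that the rescaling $(\cdot)_{1/t}$ is multiplicative with respect to composition on the relevant form-graded algebra: a quantity of form-degree $d$ is rescaled by $t^{-d}$, and the convolution wedge product of form-valued kernels adds form-degrees, so $(XY)_{1/t} = X_{1/t}Y_{1/t}$. Since $a_0$ has form-degree zero and is independent of $t$, this yields
$$a^{k,\ell}(tD)_{1/t} \;=\; a_0 \prod_{j=1}^k Q_j(t)_{1/t} \;=\; a_0 \prod_{j=1}^k \Bigl[\Bigl(\tfrac{1}{2}\wtit{\Xi}\Bigr)^{\ell_j}\bigl(d_\nu a_j + \tfrac{1}{2}\wtit{c}(\mu) a_j\bigr) \,+\, f_{\ell_j}(t)\Bigr].$$
Expanding the product, the term in which every $f_{\ell_j}(t)$ is dropped is exactly the asserted main term; each remaining term contains at least one factor $f_{\ell_j}(t)$, and so is polynomial in $t$ and vanishes at $t=0$. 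Collecting these remaining terms defines $f_{k,\ell}(t)$, which is therefore polynomial in $t$ with $f_{k,\ell}(0)=0$, as required.

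The main obstacle is the multiplicativity step. A priori each $Q_j(t)$ is a composition of differential operators of positive order rather than a bare form-valued convolution kernel, so $(XY)_{1/t} = X_{1/t}Y_{1/t}$ is not an immediate consequence of the definitions. What makes it work is precisely the content of the iterated single-factor identity: modulo polynomial-in-$t$ error terms vanishing at $t=0$, each $Q_j(t)$ equals $t^{2\ell_j+1}$ times a genuine form-valued kernel of degree $2\ell_j+1$ on $\cG$. The product of these kernels is then a well-defined form-valued kernel of total degree $\sum_j(2\ell_j+1)=k+2|\ell|$, the multiplicativity of the rescaling on that piece is a tautology from the grading, and the remainder contributions --- from the $\Lambda$-part of $t^2D^2$ in each block as well as the cross terms in the expansion --- are polynomial in $t$ and vanish at $t=0$, so they combine cleanly into $f_{k,\ell}(t)$.
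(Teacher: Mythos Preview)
Your approach is correct and matches the paper's: the paper simply writes ``Thus we have'' after establishing the iterated single-factor identity, so deducing the lemma by taking the product of those single-factor expressions is exactly the intended argument. You also correctly isolate the one genuine subtlety---that the remainders $f_{\ell_j}(t)$ carry transverse differential operators, so $(\cdot)_{1/t}$ is not literally multiplicative on the full product---and your resolution is the right one: the main terms $A_j = t^{2\ell_j+1}\bigl(\tfrac{1}{2}\wtit{\Xi}\bigr)^{\ell_j}(d_\nu a_j+\tfrac12\wtit{c}(\mu)a_j)$ are pure form-valued kernels (no differential operator part), so multiplicativity of the rescaling holds tautologically on $a_0 A_1\cdots A_k$, while every cross term contains a factor whose $t$-power strictly exceeds its Getzler grading, and Leibniz interactions with later factors can only \emph{lower} the grading, hence only \emph{raise} the surviving power of $t$ after rescaling---so all such contributions land in $f_{k,\ell}(t)$ with $f_{k,\ell}(0)=0$.
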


Note that here the $d_{\nu}a_j + \frac{1}{2}   \wtit{c}(\mu)  a_j$ are now acting as differential forms, {\em not }as Clifford forms.  

\medskip
It is very important to note the following.  Each term of $f_{k,\ell}$, for $\ell \neq (0,...,0)$, is the composition of elements of 
$C^{\infty}_c(\cG, \wedge r^*(\nu^*))$ and  $\pit$A$\Psi$DOs (actually differential operators) whose leading symbols are $0$.  For example, $[t^2D^2,[t^2D^2,[tD,a]]]_{1/t}$  contains elements of 
$C^{\infty}_c(\cG, \wedge r^*(\nu^*))$  multiplied by the elements
$$
t\nabla^{\nu}_{e_j}(d_{\nu}a) \nabla^{\nu}_{e_k}\nabla^{\nu}_{e_k}\nabla^{\nu}_{e_j}  \quad \text{  and  }  \quad
t\nabla^{\nu}_{e_j}(d_{\nu}a) \nabla^{\nu}_{e_j}\nabla^{\nu}_{e_k}\nabla^{\nu}_{e_k}.
$$
Both $t\nabla^{\nu}_{e_k}\nabla^{\nu}_{e_k}\nabla^{\nu}_{e_j}$    and  
$t \nabla^{\nu}_{e_j}\nabla^{\nu}_{e_k}\nabla^{\nu}_{e_k}$ are transverse asymptotic differential operators whose leading symbols are $0$.

\medskip
\begin{Equation}\label{digress}
\large{\text{A digression}. } 
\end{Equation} 

For later use, we recall the symbol of $t^2D^2$ given in Proposition 3.10 of \cite{BH2016a}, and we assume that we are using the Dominguez metric on $M$.   As we are assuming that the normal bundle is integrable, the term $i\langle \vartheta_{\nu}, \xi \rangle = 0$ in that calculation.  Thus 
$$
{\varsigma}(D^2)(x,\eta) \,\, = \,\,    |\eta|^2    \,\, - \,\,  i\sum_j e_{j,x} e_j\langle  \wexp^{-1}_{x}(x'),\xi \rangle  \,\,  +  \,\, 
\frac{1}{8}\kappa   \,\,  +  \,\, 
\sum_{j<k} f_j \wedge f_k   \otimes \Omega_E(e_j,e_k)  \,\, + \,\, 
$$
$$
 \frac{1}{2}\sum_{i,j,k}  f_j \wedge f_k  \otimes e_k(\langle  [e_j,X_i],X_i\rangle)\,\, + \,\,   i\langle  \mu ,  \eta \rangle     \,\, - \,\,    \frac{1}{4} |\mu|^2.    
$$
Consider the term $e_{j,x} e_j\langle  \wexp^{-1}_{x}(x'),\xi \rangle$.  Note that the three other terms in the first row are projectable.  The three terms in the second row all come from $\mu$, which is projectable if we use the Dominguez  metric.  As noted in the proof of Proposition \ref{traceElem2} below, the symbol of $D^2$ is projectable if we use the Dominguez metric.  It follows that $e_{j,x} e_j\langle  \wexp^{-1}_{x}(x'),\xi \rangle$ must be projectable, and so equals $e_{j,x} e_j\langle  \wexp^{-1}_{x}(x'),\eta \rangle$, which has grading one.
Set
\begin{Equation}\label{symbol}
$$
p(x,\eta,t) \,\, = \,\,  |\eta|^2        \,\,  +  \,\,  \sum_{j<k} f_j \wedge f_k   \otimes \Omega_E(e_j,e_k)  \,\, + \,\,  \frac{1}{2}\sum_{i;j\neq k}  f_j \wedge f_k  \otimes e_k(\langle  [e_j,X_i],X_i\rangle)    \,\, + \,\,
$$
$$
t  \Big(i\langle  \mu ,  \eta \rangle   \,\,-\,\, i\sum_j e_{j,x} e_j\langle  \wexp^{-1}_{x}(x'),\eta \rangle \Big)    \,\, + \,\,    t^2   \Big( \frac{1}{8}\kappa   \,\, - \,\,    \frac{1}{4} |\mu|^2  \,\, - \,\, 
\frac{1}{2}\sum_{i, k}    e_k(\langle  [e_k,X_i],X_i\rangle) \Big).      
$$
\end{Equation} 
\noindent
Then $p(x,\eta,t)_t = \varsigma(t^2D^2)$, so  $t^2D^2$ is an $\pit$A$\Psi$DO with leading symbol 
$$
p_0^{t^2D^2}(x,\eta) \,\, = \,\,    |\eta|^2    \,\,  +  \,\, \sum_{j<k} f_j \wedge f_k   \otimes \Omega_E(e_j,e_k)  \,\, + \,\,  \frac{1}{2}\sum_{i;j\neq k}  f_i \wedge f_k  \otimes e_k(\langle  [e_i,X_j],X_j\rangle),
$$
which we write as  
$$
p_0^{t^2D^2} \,\, = \,\,    |\eta|^2    \,\,  +  \,\, \frac{1}{2}\Big(\Omega_E   \,\,  +  \,\,  \Xi \Big),
$$ 
for short. 

Note that the leading symbol of $t^2D^2 - \lambda$, so also $t^2\what{D}^2- \lambda$, is given by
$$
p_0^{t^2D^2 - \lambda} \,\, = \,\,    |\eta|^2    \,\,  +  \,\, \frac{1}{2}\Big(\Omega_E   \,\,  +  \,\,  \Xi \Big) \,\,  -  \,\, \lambda.
$$  

\medskip
\begin{Equation}
\large{\text{Results on} $\varsigma(e^{-t^2D^2})_{1/t}$. } 
\end{Equation} 

Let $e_1,...,e_q$ be a local orthonormal basis of $\nu$ with dual orthonormal basis $f_1,...,f_q$ of $\nu^*$.  Denote by $\Omega_{\nu}$ the curvature of $\nabla^{\nu}$ acting on $\cS_{\nu}\otimes E$, which is a projectable two form.  Set 
$$
\Omega_{\nu}(e_i,e_j) \,\, = \,\, \sum_{i,j,k,\ell=1}^q (\Omega_{\nu})^k_{\ell,i,j} e_k \otimes f_{\ell}, \; \text{ that is }  \;  (\Omega_{\nu})^k_{\ell,i,j} \,\, = \,\, \langle \Omega_{\nu}(e_i,e_j)(e_{\ell}), e_k \rangle,
$$
and note that $(\Omega_{\nu})^k_{\ell,i,j}$ is skew in the indices $i,j$ (since $\Omega_{\nu}$ is a 2-form) as well as the $k,\ell$,
(since $\Omega_{\nu}$ has coefficients in $so_q = spin_q)$.
Set 
$$
\Omega_{\nu}(\pa/\pa \eta,\pa/\pa \eta') \big(p(x,\eta) \wedge q(x,\eta')\big) \,\, =  
\sum_{i,j,k,\ell=1}^q(\Omega_{\nu})^k_{\ell,i,j} f_k \wedge f_{\ell} \wedge
\frac{\pa p(x,\eta)}{\pa \eta_i } \wedge \frac{\pa q(x,\eta')}{\pa \eta'_j }.
$$
Then $e^{-\frac{1}{4}\Omega_{\nu}(\pa/\pa \eta,\pa/\pa \eta') }$ is a finite sum of compositions of such operators, and the number of compositions is $\leq q/2$ because of the $f_k \wedge f_{\ell}$.   Note that  $\Omega_{\nu}(\pa/\pa \eta,\pa/\pa \eta')$ is identical to the operator used in \cite{Getz} and  \cite{BF}.  As in \cite{Getz, BF, BH2016a}, the differential operator $a_0(p,q)$, defined on pairs of elements in  $SC^{\infty}_{\pit} (M, E)$, is given by 
$$
a_0(p,q)(x, \eta) \,\, = \,\,  
e^{-\frac{1}{4}\Omega_{\nu}(\pa/\pa \eta,\pa/\pa \eta')}
p(x,\eta) \wedge q(x, \eta') \, |_{\eta'=\eta}.
$$

It is easy to check that the results of \cite{BF} extend to $SC^{\infty}_{\pit} (M, E)$ and $SC^{\infty}_{\pit} (\what{M}, \what{E})$, as do those of \cite{BH2016a}, mutatis mutandis.  For $(\what{M},\what{F})$, this is because the geometry there  is uniformly bounded.  In particular, the fact that $p_0^{t^2D^2} =     |\eta|^2   +  \frac{1}{2}\Big(\Omega_E   +    \Xi \Big)$, which is the symbol of a uniformly transversely elliptic differential operator, see \cite{Shubin}, implies that $t^2D^2$ is asymptotically elliptic, in the sense of \cite{BF}, for the symbol space $SC^{2}_{\pit} (M, E)$, and similarly for  $t^2\what{D}^2$ and $SC^{2}_{\pit} (\what{M}, \what{E})$.    Thus, combining Lemmas 3.13 and  3.14 of \cite{BF} in our context here gives
 \begin{lemma}
$\varsigma(e^{-t^2D^2})_{1/t}$ is an asymptotic symbol in $SC^{-\infty}_{\pit} (M, E)$, with leading symbol $e^{a_0(-p_0^{t^2D^2}, \cdot)} (1)$, where $1$ is the symbol which is the constant function $1$. 
\end{lemma}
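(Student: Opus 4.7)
The plan is to realize $e^{-t^2D^2}$ through the Cauchy integral representation
$$e^{-t^2D^2} \;=\; \frac{1}{2\pi i}\int_C e^{-\lambda}(\lambda - t^2D^2)^{-1}\,d\lambda,$$
where $C$ is a Hankel-type contour encircling $[0,\infty)$, and then to extract the asymptotic symbol by applying the symbol calculus of \cite{BH2016a}. The self-adjointness and non-negativity of $D^2$ (and of $\what{D}^2$ on $\what{M}$), together with the form of the leading symbol $p_0^{t^2D^2 - \lambda} = |\eta|^2 + \frac{1}{2}(\Omega_E + \Xi) - \lambda$ computed in the digression above, guarantee that the contour may be chosen so that the resolvent is uniformly bounded along $C$ and the integral converges.

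The first step is to build an asymptotic resolvent parametrix $Q(\lambda, t) \in SC^{-2}_{\pit}(M, E)$ for $\lambda - t^2 D^2$. Because $t^2 D^2$ is already known to be asymptotically elliptic in the sense of \cite{BF}, and because the perturbation $\frac{1}{2}(\Omega_E + \Xi)$ is a $2$-form valued and hence nilpotent correction of the invertible scalar leading symbol $|\eta|^2 - \lambda$, the symbol $p_0^{t^2D^2} - \lambda$ is invertible in the $a_0$-product via a finite geometric series. Iterating the standard parametrix construction using the composition formula from \cite{BH2016a} then produces an asymptotic expansion $Q(\lambda, t) \sim \sum_{k\geq 0} t^k q_k(\lambda)$, whose leading term $q_0(\lambda)$ is this $a_0$-inverse of $p_0^{t^2D^2} - \lambda$.

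The second step is to substitute this expansion into the Cauchy integral and integrate term-by-term, yielding
$$\varsigma(e^{-t^2D^2}) \;\sim\; \sum_{k\geq 0} t^k\cdot\frac{1}{2\pi i}\int_C e^{-\lambda} q_k(\lambda)\,d\lambda.$$
After the rescaling $\eta\mapsto\eta/t$ with the grading conventions above, the $k=0$ contribution becomes
$$\frac{1}{2\pi i}\int_C e^{-\lambda}(p_0^{t^2D^2} - \lambda)^{-1}\,d\lambda \;=\; e^{a_0(-p_0^{t^2D^2}, \cdot)}(1),$$
which is the standard Cauchy functional calculus identification carried out inside the $a_0$-product. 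The Mehler-type Gaussian factor $e^{-|\eta|^2}$ arising from the exponentiation forces $\varsigma(e^{-t^2D^2})_{1/t}$ to lie in $SC^{-\infty}_{\pit}(M, E)$.

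The main obstacle is the uniform analytic control of the parametrix symbols $q_k(\lambda)$ and of the parametrix remainder as $\lambda$ ranges over the unbounded contour $C$, both in order to justify the term-by-term integration and in order to ensure that the integrated symbol inherits the rapid decay in $\eta$ required to land in $SC^{-\infty}_{\pit}$. These estimates are precisely the content of Lemmas 3.13 and 3.14 of \cite{BF}; they transplant to the present setting verbatim, relying on compactness for $(M,F)$ and on uniform bounded geometry for $(\what{M}, \what{F})$. The only additional verification is that the replacement of the usual exponential by $\what{\exp}$ (Remark \ref{expremark}) preserves the localization properties that underlie the composition and parametrix constructions, which is built into the way the symbol space $SC^{\infty}_{\pit}$ was set up in \cite{BH2016a}.
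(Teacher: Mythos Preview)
Your proposal is correct and follows essentially the same route as the paper. The paper's argument is simply to observe that $p_0^{t^2D^2}=|\eta|^2+\frac{1}{2}(\Omega_E+\Xi)$ makes $t^2D^2$ asymptotically elliptic in the sense of \cite{BF}, and then to invoke Lemmas 3.13 and 3.14 of \cite{BF} directly, noting that bounded geometry carries these over to $(\what{M},\what{F})$; you have unpacked the content of those lemmas (Cauchy integral, resolvent parametrix with leading term the $a_0$-inverse of $p_0^{t^2D^2}-\lambda$, term-by-term integration with uniform control in $\lambda$), which is exactly their proof strategy.
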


Next we need to compute, for an arbitrary symbol $q \in SC^{\infty}_{\pit} (M, E)$, 
$$
a_0(-p_0^{t^2D^2},q) \,\, = \,\,  e^{-\frac{1}{4}\Omega_{\nu}(\pa/\pa \eta,\pa/\pa \eta')}
\big((-p_0^{t^2D^2})(x,\eta) \wedge q(x, \eta')\big) \, |_{\eta' = \eta}.
$$
Denote by
$\Omega_{\nu}(\pa/\pa \eta,\pa/\pa \eta)$ the operator which acts as
$$
\Omega_{\nu}(\pa/\pa \eta,\pa/\pa \eta)q(x,\eta) \,\, = \,\, 
\sum_{i,j,k,a,b,c,d=1}^q (\Omega_{\nu})^a_{b,j,i}(\Omega_{\nu})^c_{d,i,k} \,\, f_j \wedge f_k  \wedge  \frac{\pa^2 q(x,\eta)}{\pa \eta_j\pa \eta_k }.
$$
Denote by $\Omega_{\nu}(\eta,\pa/\pa \eta)$ the operator which acts as
$$
\Omega_{\nu}(\eta,\pa/\pa \eta)q(x, \eta) \,\, = \,\, 
\sum_{i,j,k,\ell=1}^q (\Omega_{\nu})^k_{\ell,i,j} f_k \wedge f_{\ell} \wedge \eta_i \frac{\pa q(x, \eta)}{\pa \eta_j }.
$$
Then, computing as in \cite{Getz} and \cite{BF},    we have 
$$
a_0(-p_0^{t^2D^2}, q)  \,\, = \,\, 
$$
$$
\Big[ - |\eta|^2     \,\,  -  \,\,  \frac{1}{2}(\Omega_E    \,\,  + \,\,     \Xi )
\,\, + \,\, \frac{1}{2}\Omega_{\nu}(\eta,\pa/\pa \eta)
\,\, + \,\,  
\frac{1}{16}\Omega_{\nu}(\pa/\pa \eta,\pa/\pa \eta)\Big]q(x, \eta).
$$
Thus,
$$
e^{a_0(-p_0^{t^2D^2}, \cdot)} (1) \,\, = \,\, e^{ \big(- |\eta|^2     \,\,  -  \,\,  \frac{1}{2}(\Omega_E   \,\,  +  \,\,     
\Xi )  \,\, + \,\, \frac{1}{2}\Omega_{\nu}(\eta,\pa/\pa \eta)  \,\, + \,\,  
\frac{1}{16}\Omega_{\nu}(\pa/\pa \eta,\pa/\pa \eta)\big)}(1).
$$
Note that 
$
\Omega_{\nu}(\eta,\pa/\pa \eta) (|\eta|^2) \,\, = \,\, 
2\sum_{i,j,k,\ell=1}^q (\Omega_{\nu})^k_{\ell,i,j} f_k \wedge f_{\ell} \otimes \eta_i  \eta_j   \,\, = \,\, 0,$
since
$(\Omega_{\nu})^k_{\ell,i,j}  = -(\Omega_{\nu})^k_{\ell,j,i}$. 
(This corrects a typo in \cite{BF}.)   Thus $\Omega_{\nu}(\eta,\pa/\pa \eta)$ and $|\eta|^2$  commute. Since 
 $\Omega_{\nu}(\eta,\pa/\pa \eta)$ also commutes with all the other operators in the exponent,  this equals 
$$
e^{ \big(- |\eta|^2     \,\,  -  \,\,  \frac{1}{2}(\Omega_E   \,\,  + \,\,     \Xi)  
\,\, + \,\,  
 \frac{1}{16}\Omega_{\nu}(\pa/\pa \eta,\pa/\pa \eta)
\big)}e^{\frac{1}{2}\Omega_{\nu}(\eta,\pa/\pa \eta)}(1).
$$
As $e^{\frac{1}{2}\Omega_{\nu}(\eta,\pa/\pa \eta)}(1) = 1$, and $\Xi$ commutes with $ |\eta|^2 +   \frac{1}{2}\Omega_E  -  \frac{1}{16}\Omega_{\nu}(\pa/\pa \eta,\pa/\pa \eta)$, (it is a transverse $2$ form, so a nilpotent multiplication operator, which involves no differentiation), we have
$$
p_0^{e^{-t^2D^2}}  \,\, = \,\, 
e^{a_0(-p_0^{t^2D^2}, \cdot)} (1) \,\, = \,\, 
e^{ -   \frac{1}{2} \Xi }  \wedge
e^{ \big(- |\eta|^2     \,\,  -  \,\,  \frac{1}{2}\Omega_E      \,\, + \,\,  
\frac{1}{16}\Omega_{\nu}(\pa/\pa \eta,\pa/\pa \eta)\big)}
(1),
$$
which we write as
$$
e^{-\frac{1}{2} \Xi }  \wedge  e^{- |\eta|^2 -\frac{1}{2}\Omega_E+ \frac{1}{16}\Omega_{\nu}}(1).
$$
Note that the second exponent contains only operators which act in normal directions, and is constant along leaves of $F$.  

Finally, the fact that $\varsigma(e^{-t^2 D^2})_{1/t}$ is an asymptotic symbol means that we have 
$$
\varsigma(e^{-t^2 D^2})_{1/t} \,\, \sim \,\,e^{-\frac{1}{2} \Xi }  \wedge e^{- |\eta|^2 -\frac{1}{2}\Omega_E+ \frac{1}{16}\Omega_{\nu}}(1) 
\,\, + \,\,  \Upsilon.
$$
Here $\Upsilon$ is a power series in $t$ with coefficients in $SC^{-\infty}_{\pit}(M,E)$, in particular,
$\dd \Upsilon  \,\, = \,\, \sum^{\infty}_{k=1} t^k p_k(x,\eta)$, where for all $N \geq 1$,
$$
\lim_{t \to 0} t^{-N} \Big( \varsigma(e^{-t^2D^2})_{1/t} - \Big[e^{-\frac{1}{2} \Xi }  \wedge e^{- |\eta|^2 -\frac{1}{2}\Omega_E+ \frac{1}{16}\Omega_{\nu}}(1) \,\, + \,\, \sum^N_{k=1}t^k p_k(x,\eta) \Big]\Big) \,\, = \,\, 0
$$
in $SC^{-\infty}_{\pit}(M,E)$.

Just as in Corollary 3.3, \cite{BF} p.\ 25, we have 
\begin{proposition}\label{intexp}
$$ \lim_{t \to 0}   \Bigl[ (2\pi)^{-q}\int_{\nu^*_x}   \Tr_s ( \varsigma(e^{-t^2D^2})_{1/t}) d\eta \Bigr]\,\, = \,\, 
(2\pi)^{-q}\int_{\nu^*_x}   \lim_{t \to 0} \Tr_s ( \varsigma(e^{-t^2D^2})_{1/t} )d\eta \,\, = \,\, 
$$
$$
 (2\pi)^{-q}  \int_{\nu^*_x}     \Tr_s( e^{-\frac{1}{2} \Xi } e^{- |\eta|^2 -\frac{1}{2}\Omega_E+ \frac{1}{16}\Omega_{\nu}}(1) )d\eta \,\, = \,\,    e^{-\frac{1}{2} \Xi }  \wedge \ch(E) \wedge \what{A}(\nu^*)](x).
$$
\end{proposition}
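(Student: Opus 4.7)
My plan has three phases, corresponding to the three equalities in the statement.

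\medskip
\noindent\textbf{Phase 1: Interchange of limit and integral.}
Because $\varsigma(e^{-t^2D^2})_{1/t}$ is an asymptotic symbol in $SC^{-\infty}_{\pit}(M,E)$ with leading symbol $e^{-\frac{1}{2}\Xi}\wedge e^{-|\eta|^2-\frac{1}{2}\Omega_E+\frac{1}{16}\Omega_\nu}(1)$ and remainder $\Upsilon$, I would write, for any fixed $N\geq 1$,
$$
\varsigma(e^{-t^2D^2})_{1/t} \,\, = \,\, e^{-\frac{1}{2}\Xi}\wedge e^{-|\eta|^2-\frac{1}{2}\Omega_E+\frac{1}{16}\Omega_\nu}(1) \,\, + \,\, \sum_{k=1}^N t^k p_k(x,\eta) \,\, + \,\, r_N(t,x,\eta),
$$
with $t^{-N}\|r_N(t)\|\to 0$ in $SC^{-\infty}_{\pit}(M,E)$. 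The Schwartz-type seminorms on $SC^{-\infty}_{\pit}(M,E)$ provide bounds of the form $|p_k(x,\eta)|\leq C_k(1+|\eta|)^{-M}$ for arbitrarily large $M$, and the leading symbol carries a factor $e^{-|\eta|^2}$ from Mehler-type considerations that dominates the polynomial in $\eta$ produced by expanding the $\Omega_\nu(\partial/\partial\eta,\partial/\partial\eta)$ operator on $1$. This $L^1(\nu^*_x,d\eta)$ domination, uniform for $t\in[0,1]$, justifies interchange by dominated convergence.

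\medskip
\noindent\textbf{Phase 2: Identification of the pointwise limit.} This is immediate from the leading symbol computation carried out just before the statement, namely
$$
\lim_{t\to 0}\varsigma(e^{-t^2D^2})_{1/t} \,\, = \,\, e^{-\frac{1}{2}\Xi}\wedge e^{-|\eta|^2-\frac{1}{2}\Omega_E+\frac{1}{16}\Omega_\nu}(1),
$$
followed by taking $\Tr_s$ inside the pointwise limit (permissible since the target is finite-dimensional).

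\medskip
\noindent\textbf{Phase 3: The Mehler--Getzler calculation.} This is the substantive step, and the main obstacle. I would use the fact that $\Xi$ is multiplication by a nilpotent transverse 2-form involving no differentiation, so it commutes with every operator in the second exponential; similarly, $\Omega_E$ is $\eta$-independent and its endomorphism part acts on the $E$-factor while $\Omega_\nu$ acts on the $\cS_\nu$-factor, so $e^{-\frac{1}{2}\Omega_E}$ commutes with the differential operator $\Omega_\nu(\partial/\partial\eta,\partial/\partial\eta)$ and with $e^{-|\eta|^2}$. This allows me to factor
$$
e^{-\frac{1}{2}\Xi}\wedge e^{-|\eta|^2-\frac{1}{2}\Omega_E+\frac{1}{16}\Omega_\nu}(1) \,\, = \,\, e^{-\frac{1}{2}\Xi}\wedge e^{-\frac{1}{2}\Omega_E}\wedge e^{-|\eta|^2+\frac{1}{16}\Omega_\nu(\partial/\partial\eta,\partial/\partial\eta)}(1).
$$
Splitting the supertrace as $\Tr_s = \tr_{\cS_\nu}^s\otimes\tr_E$ and using the Berezin integral description of $\tr_{\cS_\nu}^s$ (which extracts the top Grassmann component), the $E$-trace produces $\tr_E(e^{-\frac{1}{2}\Omega_E})=\ch(E)$, while Mehler's formula applied to the harmonic oscillator with skew-symmetric perturbation $\Omega_\nu$ yields the classical Gaussian identity
$$
(2\pi)^{-q}\int_{\nu^*_x} \tr_{\cS_\nu}^s\Bigl(e^{-|\eta|^2+\frac{1}{16}\Omega_\nu(\partial/\partial\eta,\partial/\partial\eta)}(1)\Bigr)\,d\eta \,\, = \,\, \what{A}(\nu^*)(x).
$$
The computation is the one executed in \cite{Getz} and \cite{BF}; the only thing to verify here is that the substitutions we have made (replacing the $\eta$-independent transverse 2-forms $\Xi$ and $\Omega_E$ by scalar commuting factors from the $\eta$-integration's viewpoint) preserve the validity of that computation, which follows from the commutation observations above. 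Multiplying the three factors yields the stated $e^{-\frac{1}{2}\Xi}\wedge \ch(E)\wedge \what{A}(\nu^*)$.
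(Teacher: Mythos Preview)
Your proposal is correct and follows essentially the same route as the paper: the paper simply cites Corollary 3.3 of \cite{BF} for the interchange of limit and integral and the identification of the leading symbol, and dismisses the final equality as ``classical,'' while you have written out the dominated-convergence and Mehler--Getzler steps that those references contain. Your Phase~3 factorization is exactly the commutation argument underlying the classical computation, so there is no substantive difference in approach.
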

The last equality is classical.

\section{Proof of the Theorem \ref{CCchar}  }\label{proof}

To prove Theorem \ref{CCchar}, we need to compute the Connes-Chern character of $(C_c^\infty (\cG), \maH,D)$, and then note that the same proof works for $(C_c^\infty (\cG), B(\what{\maH})^{\Gamma}, \what{D})$, mutatis mutandis.   Combined with Proposition \ref{ASresult} below, this gives the proof of the theorem. 

Suppose $a_0, \cdots , a_k \in C^{\infty}_c(\cG)$.  In the notation of \cite{CM95}, Theorem II.3, p.\ 230, for $k >0$, $k$ even, (there is a separate formula for $\phi_0$, see below)
the Connes-Chern character of $(C_c^\infty (\cG), \maH,D)$  is given by the local formula
$$
\phi_k (a_0, \cdots , a_k)  \,\, = \,\, \sum_{\ell,q} \frac{(-1)^{|\ell|}}{\ell_1! \cdots \ell_k!} \alpha_{\ell} \sigma_q(|\ell|+ \frac{k}{2})\tau_q(\gamma a^{k,\ell}(D) (\I + D^2)^{-k/2 - |\ell|}).
$$
As above, $a^{k,\ell}(D) = a_0(\delta a_1)^{(\ell_1)} \cdots (\delta a_k)^{(\ell_k)}$, where $\delta a_j = [ D,a_j]$,  $ \mathfrak{D}(a_j) = [ D^2,a_j]$,  $(\delta a_j)^{(\ell_j)} =  \mathfrak{D}^{\ell_j}(\delta a_j)$,  $\gamma$ is the grading operator, and $\tau_q$ is a certain residue.  Only $q =0$ actually occurs, since the dimension spectrum is simple.   Note that 
$\sigma_0(k/2 +|\ell|) = \Gamma(k/2 +|\ell|)$, and  $\alpha^{-1}_{\ell} = ({\ell}_1+1)({\ell}_1 + {\ell}_2+2)\cdots({\ell}_1 + \cdots + {\ell}_k+k)$.  Then, for $k>0$,
$$
\tau_0(\gamma a^{k,\ell}(D)(\I + D^2)^{-k/2 - |\ell|}) \,\, = \,\, 
Res_{z=0} \Bigl[ \Tr_s(a^{k,\ell}(D)(\I + D^2)^{-k/2 - |{\ell}|-z})\Bigr] \,\, = \,\,
$$
$$
Res_{z=0} \Bigl[ \Tr_s(\Gamma(k/2 + |{\ell}|+ z)^{-1}  \int_0^{\infty} t^{k/2 + |{\ell}|+ z-1}a^{k,\ell}(D)e^{-t(\I+D^2)} dt ) \Bigr]   \,\, = \,\, 
$$
$$
\Gamma(k/2+ |{\ell}|)^{-1}Res_{z=0} \Bigl[ \Tr_s( \int_0^{\infty} 2t^{k + 2|{\ell}| + 2z-1} a^{k,\ell}(D)e^{-t^2(\I+D^2)}  dt ) \Bigr]  \,\, = \,\, 
$$
$$
\Gamma(k/2+ |{\ell}|)^{-1}Res_{z=0} \Bigl[ \int_0^{\infty}2 t^{2z-1} e^{-t^2}\Tr_s(a^{k,\ell}(tD)e^{-t^2 D^2})  dt \Bigr].
$$

\begin{lemma}
For any $\ep > 0$,
$$
Res_{z=0} \Bigl[ \int_0^{\infty}2 t^{2z-1} e^{-t^2}\Tr_s(a^{k,\ell}(tD)e^{-t^2 D^2})  dt \Bigr]  \,\, = \,\,
Res_{z=0} \Bigl[ \int_0^{\ep}2 t^{2z-1} e^{-t^2}\Tr_s(a^{k,\ell}(tD)e^{-t^2 D^2})  dt \Bigr].
$$
\end{lemma}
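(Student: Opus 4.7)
The plan is to split the integral at $\ep$ and show that the tail integral
\[
\int_\ep^\infty 2 t^{2z-1} e^{-t^2} \Tr_s\bigl( a^{k,\ell}(tD) e^{-t^2 D^2} \bigr) dt
\]
defines an entire function of $z \in \C$. Since an entire function has zero residue at every point, the claimed equality of residues at $z=0$ follows at once.

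The key analytic ingredient is a polynomial-in-$t$ bound
\[
\bigl| \Tr_s \bigl( a^{k,\ell}(tD) e^{-t^2 D^2} \bigr) \bigr| \,\, \leq \,\, P(t), \qquad t \geq \ep,
\]
for some polynomial $P$. Recalling that $a^{k,\ell}(tD) = t^{k + 2|\ell|} a_0 (\delta a_1)^{(\ell_1)} \cdots (\delta a_k)^{(\ell_k)}$ is a monomial in $t$ times a fixed differential operator of order $|\ell|$, I would factor
\[
a^{k,\ell}(tD) e^{-t^2 D^2} \,\, = \,\, \bigl[ a^{k,\ell}(tD)(\I + D^2)^{-N} \bigr]  \bigl[ (\I + D^2)^N e^{-\ep^2 D^2 /2} \bigr]  e^{-(t^2 - \ep^2/2) D^2}
\]
with $N$ large enough (depending only on $|\ell|$) that the first bracket is a bounded operator with operator norm $O(t^{k+2|\ell|})$. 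The middle bracket is trace class by finite summability of the spectral triple, and for $t \geq \ep$ the rightmost factor has operator norm at most $1$. H\"older's inequality for trace ideals then produces the desired polynomial bound.

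With this pointwise bound, the integrand is dominated on $[\ep, \infty)$, uniformly for $z$ in any compact subset of $\C$, by an integrable function of $t$. A Morera--Fubini argument (interchanging a triangle integral in $z$ with the $t$ integral, the former vanishing because $t^{2z-1}$ is entire in $z$ for each fixed $t>0$) shows that the tail is entire in $z$, completing the proof. As a minor variant, one may differentiate under the integral sign in $z$ using the same dominating function.

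The main obstacle is the trace-class statement for $(\I + D^2)^N e^{-\ep^2 D^2 /2}$. In the Type I setting this is immediate from standard elliptic theory for the transversely elliptic Connes--Moscovici Dirac-type operator on the compact foliated manifold $M$, in concert with the simple dimension spectrum recalled earlier. In the Type II setting the same argument applies with the operator super-trace $\Tr_s$ replaced by the semi-finite Atiyah super-trace $\tau_s$, using the $\Gamma$-invariance of $\what D$ and the finite Atiyah trace of the heat operator.
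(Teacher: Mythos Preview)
Your proposal is correct and follows essentially the same route as the paper: split at $\ep$, factor $a^{k,\ell}(tD)e^{-t^2D^2}$ into a bounded piece times a trace-class piece involving $(\I+D^2)^N e^{-cD^2}$, and use the Gaussian $e^{-t^2}$ to control the tail integral uniformly in $z$. The paper's factorization is marginally simpler---it takes $N=|\ell|/2$ and uses the monotonicity $\Tr\bigl((1+D^2)^{|\ell|/2}e^{-t^2D^2}\bigr)\le \Tr\bigl((1+D^2)^{|\ell|/2}e^{-\ep^2D^2}\bigr)$ for $t\ge\ep$ rather than your splitting $e^{-t^2D^2}=e^{-\ep^2D^2/2}e^{-(t^2-\ep^2/2)D^2}$---and it phrases the conclusion as ``the tail is finite for every $z$'' rather than your more precise ``the tail is entire,'' but the arguments are interchangeable.
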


\begin{proof}
We need to show  that 
$$
\int_{\ep}^{\infty}2 t^{2z-1} e^{-t^2}\Tr_s(a^{k,\ell}(tD)e^{-t^2 D^2})  dt  \,\, = \,\,
\int_{\ep}^{\infty}2 t^{2z+k + 2|\ell|-1} e^{-t^2}\Tr_s(a^{k,\ell}(D)e^{-t^2 D^2})  dt
$$
is  finite.  

If $T$ is an operator of order $j$, the operator $[D^2,T] = |D| \, [|D|,T] + [|D|,T] \, |D|$ has order $j+1$.  So 
the operator  $a^{k,\ell}(D)$ has order $|\ell|$, and the operator $a^{k,\ell}(D) (1+D^2)^{-|\ell|/2}$ is bounded.
Now 
$$
|\Tr_s(a^{k,\ell}(D)e^{-t^2 D^2})| \,\, = \,\, |\Tr_s(a^{k,\ell}(D)(1+D^2)^{-|\ell|/2}(1+D^2)^{|\ell|/2}e^{-t^2 D^2})|
$$
and $(1+D^2)^{|\ell|/2}e^{-t^2 D^2})$ is positive and trace class.  As $|\Tr_s(AB)| \leq ||A|| \Tr(B)$ for $A$ bounded and $B$ positive and  trace class, we have
$$
|\Tr_s(a^{k,\ell}(D)e^{-t^2 D^2})| \,\, \leq \,\, ||(a^{k,\ell}(D)(1+D^2)^{-|\ell|/2}||  \Tr((1+D^2)^{|\ell|/2}e^{-t^2 D^2}).
$$
For $t \geq\ep$,
$$
\Tr((1+D^2)^{|\ell|/2}e^{-t^2 D^2}) \,\, \leq \,\, \Tr((1+D^2)^{|\ell|/2}e^{-\ep^2 D^2}).
$$
Therefore, for any $z \in \C$, 
$$
|  \int_{\ep}^{\infty}2 t^{2z+k + 2|\ell|-1} e^{-t^2}\Tr_s(a^{k,\ell}(D)e^{-t^2 D^2})  dt  |  \,\, \leq
$$
$$
||(a^{k,\ell}(D)(1+D^2)^{-|\ell|/2}|| \Tr((1+D^2)^{|\ell|/2}e^{-\ep^2 D^2}) \int_{\ep}^{\infty} 2t^{2re(z)+k + 2|\ell|-1} e^{-t^2} dt  \,\, < \,\, \infty,
$$
where $re(z)$ is the real part of $z$.
\end{proof}

Suppose that $(\what{M},\what{F})$ is a covering foliation  of $(M,F)$.
For $a \in C^{\infty}_c(\cG; \wedge^k  r^*(\nu^* ))$, set $a_t(\gamma) = t^k a(\gamma)$.  We identify  $x \in M$ with $\overline{x } \in \cG$, the class of the constant path at $x $.  So $a(x ):= a(\overline{x } )$.  Recall that the dual normal bundle of $\what{F}$ is  $\what{\nu}^*$,   and that we are thinking of $M$ as being a fundamental domain  $M \subset \what{M}$.  So for a symbol $p \in SC^{\ell}_{\pit} (\what{M}, \what{E})$, $p(x,\eta)$ makes sense for $x \in M \subset \what{M}$ and $\eta \in \what{\nu}_x^* = \nu_x^*$.
Denote by $dx_F$ the volume form along the leaves of $F$.  

The trace and supertrace  for $M$ are the usual ones, denoted $\Tr$ and $\Tr_s$, while those for $\what{M}$, which are the usual traces restricted to $M \subset \what{M}$, are denoted $\tau$ and $\tau_s$.  As noted above,  these traces are constructed using the Berezin integral, \cite{Getz, BGV92}.   This is why only the tangential volume form $dx_F$ appears in the formulas below.  

Note that the actions of $a \in C^{\infty}_c(\cG; \wedge^* r^*(\nu^*))$ are smoothing along the leaves of $F$ and $\what{F}$, which is why we may restrict our attention to operators with symbols in $SC^{\infty}_{\pit}(M,E)$ and $SC^{\infty}_{\pit}(\what{M},\what{E})$.
 
Now we invoke results of Dominguez, \cite{D98}, and Alvarez Lop\'{e}z, \cite{AL92}, which simplifiy things considerably.  Recall, \cite{CM95}, that the cohomology classes of the cocycles $(\phi_k)$  and $(\what{\phi}_k)$ do not depend on the metric on M or the induced metric on $\what{M}$. 
Dominguez has shown that it is always possible to choose the metric on  $M$ (so also on $\what{M}$) so that $c(\mu)$ is a basic form, so $\Xi$ is also, (and in fact $\Xi =d_{\nu} c(\mu) = dc(\mu)$).  Thus, if we use this metric, we have $\wtit{c}(\mu) = \wtit{\Xi} = 0$.  Alvarez Lop\'{e}z showed that the basic component of the mean curvature vector field of any Riemannian foliation is closed.   Thus  $\Xi = 0$ for the Dominguez metric.  Similarly for the corresponding operators on $(\what{M},\what{F})$.    Then Lemma \ref{limakl} immediately gives
$$
\text{if $\ell \neq (0,...,0)$,} \,\,\,    a^{k,\ell}(tD)_{1/t}	 \,\, = \,\,   \,\,f_{k,\ell}(t),  
\quad \text{  and  } \quad 
a^{k,0}(tD)_{1/t} \,\, = \,\,  a^{k,0}(t\what{D})_{1/t} \,\, = \,\, a_0 d_{\nu}a_1 \cdots d_{\nu}a_k.
$$ 

As noted above, each term of $f_{k,\ell}$, $\ell \neq (0,...,0)$, can be written as a sum of compositions of an element of  $C^{\infty}_c(\cG, \wedge^* r^*(\nu^*))$ and a $\pit$A$\Psi$DO (actually differential operator) whose leading symbol is $0$.  More specifically, denote by $e_1,...,e_q$  a local orthonormal basis of  $\nu$, and denote by $\nabla_J$ a (local) operator which is a finite composition (in any order and with repetitions allowed) of $\nabla^{\nu}_{e_1}, \cdots \nabla^{\nu}_{e_q}$ and $\nabla_{\mu}$.  Then each term of $f_{k,\ell}$ may be written locally as $t^{b_J} a_J  \nabla_J$  for some $a_J \in C^{\infty}_c(\cG, \wedge^* r^*( \nu^*))$, independent of $t$, and $b_J \geq 1$.  Similarly for $a^{k,\ell}(t\what{D})_{1/t}$ and the induced operator $ t^{b_J}a_J  \what{\nabla}_{J}$ on $\what{\maH}$.   Then 
$$
a^{k,\ell}(tD)  \,\, = \,\,  t^{k + 2|\ell |} \sum_J  a_J   \nabla_J    \,\, = \,\,   \sum_J  (a_J)_t  t^{b_J} (\nabla_J)_t,
$$  
and  
\begin{Equation}\label{akleq}
 \hspace{3.0cm}$\dd a^{k,\ell}(t\what{D})  \,\, = \,\,  t^{k + 2|\ell |} \sum_J  a_J   \what{\nabla}_J \,\, = \,\,  \sum_J  (a_J)_t   t^{b_J}(\what{\nabla}_J)_t.$
\end{Equation}
Note that both $t^{b_J}(\nabla_J)_t$   and $ t^{b_J}   (\what{\nabla}_J)_t$ are transverse asymptotic differential operators with leading symbols $0$.
 
\begin{proposition}\label{traceElem2}
For all $t > 0$,
$$
\Tr(a^{k,\ell}(tD) e^{-t^2D^2})  \,\, = \,\,   (2 \pi)^{-q}   \int_{\nu^*}
t^{k + 2|\ell | }\sum_J   \sum_{\gamma \in\cG_x^x} (a_J) _{1/t}(\gamma^{-1})   \Tr \Big(\varsigma(\nabla_J e^{-t^2D^2})_{1/t} (x, \eta) \Big) \,  d\eta dx_F,
$$
and
$$
\Tr_s(a^{k,\ell}(tD)  e^{-t^2D^2}) \,\, = \,\,   (2 \pi)^{-q}   \int_{\nu^*} 
t^{k + 2|\ell | }\sum_J  
\sum_{\gamma \in \cG_x^x} (a_J)_{1/t}(\gamma^{-1})  \Tr_s \Big(  \varsigma( \nabla_J e^{-t^2D^2})_{1/t} ( x, \eta) \Big)\,  d\eta dx_F.
$$

The corresponding results hold for $\tau(a^{k,\ell}(t\what{D}) e^{-t^2 \what{D}^2})$  and $\tau_s(a^{k,\ell}(t\what{D})e^{-t^2\what{D}^2}) $, where $\nu^*$ is replaced by $\what{\nu}^*|M \subset \what{M}$, and $\sum_{\gamma \in \cG_x^x}(a_J)_{1/t}(\gamma^{-1}) $s replaced by $\sum_{\what{\gamma} \in \what{\cG}_x^x} (a_J)_{1/t}((\rho \circ \what{\gamma})^{-1}) $.
\end{proposition}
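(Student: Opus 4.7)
The argument proceeds in three stages: reduce to a single product of a leafwise operator and a transverse smoothing operator, evaluate the Schwartz kernel on the diagonal via the symbol calculus, and then install the $t$-rescaling.

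First I would use Equation \ref{akleq} to write $a^{k,\ell}(tD) = t^{k+2|\ell|}\sum_J a_J\,\nabla_J$, so by linearity it suffices to compute $\Tr(a_J\,\nabla_J\, e^{-t^2D^2})$ for each $J$. Set $B := \nabla_J\, e^{-t^2D^2}$. Since, after the Dominguez metric reduction, every transverse derivative in $D^2$ acts only on the transverse directions, $B$ is a $\pit$A$\Psi$DO with symbol $\varsigma(B)\in SC^{-\infty}_{\pit}(M,E)$. By Remark \ref{quant} (which is the reason for using $\whexp$ rather than the Levi-Civita exponential), $(Bu)(y)$ depends only on the restriction $u|_{L^{\pit}_y}$, and the Schwartz kernel of $B$ is distributional along the $F$-direction: in foliation coordinates one has $k_B(y,w) = k_B^{\pit}(y,w)\,\delta_F(w-y)$, with $k_B^{\pit}$ smooth on the fiber product over $M/F_{\pit}$.

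Next I would compute the kernel of the composition $a_JB$ using Definition \ref{action}:
\[
k_{a_JB}(x,z) \,=\, \int_{\cG_x} a_J(\gamma^{-1})\, h_{\gamma^{-1}}\, k_B(r(\gamma),z)\, d\gamma.
\]
To evaluate at $z=x$, the delta-support of $k_B$ along $F$ requires $x\in L^{\pit}_{r(\gamma)}$, while $\gamma\in\cG_x$ requires $r(\gamma)\in L_x$; the cutoff $\alpha$ in $\theta^{\alpha}$ restricts $r(\gamma)$ to a neighborhood of $x$ in which $L^{\pit}_x\cap L_x=\{x\}$. Hence only $\gamma$ with $r(\gamma)=x$ contribute, i.e.\ $\gamma\in\cG_x^x$, and the integral collapses to a \emph{finite} sum (finite by compactness of $\Supp a_J$). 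At $r(\gamma)=x$ the quantization formula of Remark \ref{quant} gives $k_B^{\pit}(x,x)=(2\pi)^{-q}\int_{\nu_x^*}\varsigma(B)(x,\eta)\,d\eta$, so that
\[
k_{a_JB}(x,x) \,=\, \sum_{\gamma\in\cG_x^x} a_J(\gamma^{-1})\, h_{\gamma^{-1}}\cdot(2\pi)^{-q}\int_{\nu_x^*}\varsigma(B)(x,\eta)\,d\eta.
\]
Taking $\Tr$ (resp.\ $\Tr_s$) amounts to integrating the fiberwise (super)trace of $k_{a_JB}(x,x)$ over $M$; by the Berezin convention, the transverse part of $dvol_M$ is absorbed into the top-form extraction on $\wedge^*\nu^*$, leaving the leafwise volume form $dx_F$ as the remaining explicit measure. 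Multiplying by $t^{k+2|\ell|}$ gives the prefactor, and the substitution $\eta\to\eta/t$ together with the form-degree conventions converts $a_J$ and $\varsigma(B)$ into $(a_J)_{1/t}$ and $\varsigma(\nabla_J e^{-t^2D^2})_{1/t}$, producing the displayed identity. The covering case is formally identical: the Atiyah trace $\tau$ restricts the outer integration to a fundamental domain $M\subset\what{M}$, the unique lift lemma identifies $\cG_{\rho(\what{x})}$ with $\what{\cG}_{\what{x}}$, and Definition \ref{action} replaces $a_J(\gamma^{-1})$ by $a_J((\rho\circ\what{\gamma})^{-1})$ with the sum running over $\what{\gamma}\in\what{\cG}_x^x$.

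The main obstacle is making rigorous the collapse $\int_{\cG_x}\rightsquigarrow\sum_{\gamma\in\cG_x^x}$. This requires simultaneously: (i) controlling the distributional ($\delta$-like) character of the transverse heat kernel along the $F$-direction; (ii) using the cutoff $\alpha$ and the local transversality $L^{\pit}_x\cap L_x=\{x\}$ to force $r(\gamma)=x$; and (iii) verifying that the remaining $\cG_x$-measure matches the Berezin convention so that the $F$-direction measure of $dvol_M$ is genuinely produced by the integration along $\wL_x$ rather than double-counted. A clean way to organize this, which I would adopt, is to pass to local foliation charts, expand $Bu(y)$ via $\theta^{\alpha}(\varsigma(B))$ explicitly, change variable $w=\whexp_{r(\gamma)}(X)$, and identify the resulting $F$-delta with the pairing of $d\gamma$ against the leafwise test function $k_B^{\pit}(r(\gamma),\cdot)$ at the unique preimage $r(\gamma)=x$.
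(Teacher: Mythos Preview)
Your proposal is correct and follows essentially the same strategy as the paper: decompose via $a^{k,\ell}(tD)=t^{k+2|\ell|}\sum_J a_J\nabla_J$, compute the diagonal Schwartz kernel of $a_J\nabla_Je^{-t^2D^2}$, reduce the $\cG_x$-integral to a sum over $\cG_x^x$, identify the transverse kernel with the symbol integral, and install the Getzler rescaling. The difference is mainly organizational. Where you argue geometrically that the Schwartz kernel of $B=\nabla_Je^{-t^2D^2}$ carries a $\delta_F$ along the leaf direction (because, with the Dominguez metric, $D$ is projectable and $B$ acts only along $L^{\pit}$), the paper instead writes $K(x,x)u_x$ by applying the operator to $\delta_x(\cdot)\mathcal{T}_{x,\cdot}(u_x)$, Fourier-inverts $\delta_0$ over $T^*M_x$, splits $\xi=(\zeta,\eta)$, and uses that $\varsigma(\nabla_Je^{-t^2D^2})$ depends only on $\eta$ to isolate the $\zeta$-integral as the leaf delta $\delta^L_x(r(\gamma))$. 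Both routes produce the same collapse to $\sum_{\gamma\in\cG_x^x}$ and the same identification $k_B^{\pit}(x,x)=(2\pi)^{-q}\int_{\nu_x^*}\varsigma(B)(x,\eta)\,d\eta$; the paper's Fourier argument has the advantage that it uses the \emph{definition} of $\varsigma$ verbatim, so no separate ``kernel equals symbol integral'' lemma is needed.

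Two small points to tighten. First, your displayed formula for $k_{a_JB}(x,x)$ still carries the holonomy $h_{\gamma^{-1}}$; the proposition's formula does not. The paper removes it by noting that $\varsigma(\nabla_Je^{-t^2D^2})$ is holonomy invariant (it is built from projectable data), so $h_{\gamma^{-1}}[\varsigma(\nabla_Je^{-t^2D^2})(x,\eta)]=\varsigma(\nabla_Je^{-t^2D^2})(x,\eta)$ for $\gamma\in\cG_x^x$. You should say this explicitly. Second, your appeal to ``the cutoff $\alpha$ in $\theta^{\alpha}$'' to localize $r(\gamma)$ near $x$ is a bit off: $e^{-t^2D^2}$ is a genuine operator, not a quantization, so the localization really comes from the $\delta_F$-structure of $k_B$ (equivalently, from $B$ acting purely along $L^{\pit}$), together with the local transversality $L^{\pit}_x\cap L_x=\{x\}$. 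That is exactly what the paper's $\zeta$-integral $=\delta^L_x(r(\gamma))$ encodes.
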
%

\begin{proof}     
We only do the proof for  $\Tr_s(a^{k,\ell}(tD)  e^{-t^2D^2})$,  as the other three cases are quit similar.

First we prove it  without applying the rescaling operator $(\cdot)_{1/t}$.  
From \cite{BH2016b} we have
\begin{lemma}\label{traceclass}
For all $t > 0$,  $a_J \nabla_Je^{-t^2D^2}$ is a smoothing operator on $\maH$.  Similarly for  $a_J \what{\nabla}_Je^{-t^2\what{D}^2}$  acting on $\what{\maH}$.
\end{lemma}
\noindent

Thus we have 
$$
\Tr_s(a_J {\nabla}_J e^{-t^2{D}^2}) \,\, = \,\, \int_M \Tr_s(K(x,x)) \, dx_F,$$
where $K$ is the Schwartz kernel of $a_J {\nabla}_J e^{-t^2{D}^2}$, and we using the Berezin integral for spinors,  \cite{Getz, BGV92}.  

Denote by $\delta_x$, $\delta_0$, $\delta^{{F}}_0$, and $\delta^{{L}}_x$ the Dirac delta operators at $x \in M$, $0 \in T^*{M}_x$, $0 \in T^*{F}_x$, and  $x \in {L}_x$, respectively.  For ${u}_{x} \in ({\cS}_{\nu} \otimes {E})_x $,
$$
K(x,x){u}_{x}       \,\, = \,\,
\Big(a_J {\nabla}_Je^{-t^2{D}^2}\big(\delta_x ({x'}) {\mathcal{T}}_{x,{x'}}({u}_{x})\big) \Big)(x)   \,\, = \,\,
\Big(a_J {\nabla}_Je^{-t^2{D}^2}\big(\delta_0({\what{\exp}}_{x}^{-1}({x'})){\alpha}(x, {x'}){\mathcal{T}}_{x,{x'}} ({u}_{x})\big)\Big)(x).
$$
Now the Fourier Transform of $\delta_0$ is the constant function $1$.  So 
$$
\Tr_s(K(x,x)) \,\, = \,\,    \Tr_s \Big( \big( a_J {\nabla}_Je^{-t^2{D}^2} \big[ (2 \pi)^{-n}   \int_{T^*{M}_x} e^{i\langle {\what{\exp}}^{-1}_{x}({x'}),\xi \rangle}{\alpha}(x, {x'}){\mathcal{T}}_{x,{x'}}  \, d\xi  \big] \big) (x) \Big)   \,\, = \,\,    
$$
$$
(2 \pi)^{-q}   \int_{{\nu}^*_x} \Tr_s \Big( \big(a_J  {\nabla}_J e^{-t^2{D}^2}  \big[ (2 \pi)^{-p}   \int_{T^*{F}_x}e^{i\langle {\what{\exp}}^{-1}_{x}({x'}), (\zeta,0)\rangle }e^{i\langle {\what{\exp}}^{-1}_{x}({x'}), (0,\eta) \rangle}
{\alpha}(x, {x'})  {\mathcal{T}}_{x,{x'}} d\zeta \big]  \big)    (x)  \,   \Big)   d\eta,  
$$
where we are able to move the integration outside by the argument in the proof of Theorem 3.7 of \cite{Getz}.
Since the symbol of ${\nabla}_Je^{-t^2{D}^2}$ depends only on $\eta$, we may write this as
$$
(2 \pi)^{-q}   \int_{{\nu}^*_x} \Tr_s \Big( \big( a_J  \big[ (2 \pi)^{-p}   \int_{T^*{F}_x}e^{i\langle {\what{\exp}}^{-1}_{x}({x'}), (\zeta,0)\rangle}  d\zeta \, {\nabla}_J  e^{-t^2{D}^2}( e^{i\langle {\what{\exp}}^{-1}_{x}({x'}), (0,\eta) \rangle}
{\alpha}(x, {x'})  {\mathcal{T}}_{x,{x'}} ) \big]  \big)(x)  \Big)  \,  d\eta   \,\, = \,\,  
$$
\begin{multline*}
(2 \pi)^{-q}  \int_{{\nu}^*_x} \Tr_s 
\Big(  \int_{{\cG}_x}  \, a_J  (\gamma^{-1})\\
 h_{{\gamma}^{-1}} \Big[\Big( (2 \pi)^{-p} \int_{T^*{F}_x}  \hspace{-0.2cm}e^{i\langle {\what{\exp}}^{-1}_{x}(x'), (\zeta,0)\rangle} \, d\zeta \,\, {\nabla}_J  e^{-t^2{D}^2}  \big( e^{i\langle {\what{\exp}}^{-1}_{x}x'), (0,\eta) \rangle}{\alpha}(x, x')  {\mathcal{T}}_{x,x'}\big) \Big) ({r}({\gamma}) )  \Big] d{\gamma} \Big)   \, d\eta.
\end{multline*}
Now, at ${\gamma} \in {\cG}_x$, 
$$
\Big( (2 \pi)^{-p}\int_{T^*{F}_x} \hspace{-0.2cm}     e^{i\langle {\what{\exp}}^{-1}_{x}(x'), (\zeta,0)\rangle}   \, d\zeta \Big) ({r}({\gamma} ))  \,\, = \,\,  
\delta^{{F}}_0  (({\what{\exp}}_{x} \, | _{{L}_x})^{-1}({r}({\gamma} )))  {\alpha}(x,{r}({\gamma} ))  \,\, = \,\,
\delta^{{L}}_x({r}({\gamma} )).
$$
Thus, 
\begin{multline*}
\Tr_s(K(x,x))    \,\, = \,\,    
(2 \pi)^{-q} \int_{{\nu}^*_x}     \Tr_s 
\Big(  \int_{{{\cG}_x}}\delta^{{L}}_x({r}(\gamma ))\,  a_J(\gamma^{-1})   \\
          h_{{\gamma}^{-1}}  \Big[{\nabla}_J e^{-t^2{D}^2}\Big( ( e^{i\langle {\what{\exp}}^{-1}_{x}({x'}), (0,\eta) \rangle}{\alpha}(x, {x'}))  {\mathcal{T}}_{x,{x'}} ) \, |_{{L}_x} \Big)({r}({\gamma}) )  \Big]d{\gamma} \Big) \, d\eta    \,\, = \,\, 
\end{multline*}
$$
(2 \pi)^{-q}   \int_{ {\nu}^*_x}     \sum_{{\gamma} \in {\cG}_x^x}  \Tr_s \Big( a_J(\gamma^{-1}) h_{{\gamma}^{-1}}  \Big[ \varsigma({\nabla}_Je^{-t^2{D}^2})( x, \eta) \Big] \Big) \, d\eta\,\, = \,\,  
$$
$$ 
(2 \pi)^{-q}   \int_{ {\nu}^*_x}  \sum_{{\gamma} \in {\cG}_x^x} a_J(\gamma^{-1})  \Tr_s \Big(\varsigma( {\nabla}_J e^{-t^2{D}^2})(x, \eta) ) \Big) \, d\eta,
$$
since $r(\gamma)=x$  for ${\gamma} \in {\cG}_x^x$, and 
$h_{{\gamma}^{-1}}\Big[ \varsigma({\nabla}_J e^{-t^2{D}^2})( x, \eta) \Big] = \varsigma({\nabla}_Je^{-t^2{D}^2})( x, \eta)$, as this symbol is invariant under the holonomy action.
We thus have
$$
\Tr_s(a_J {\nabla}_J e^{-t^2{D}^2})  \,\, = \,\,   (2 \pi)^{-q}   \int_{\nu^*}  \sum_{{\gamma} \in{\cG}_x^x} a_J(\gamma^{-1}) \Tr_s \Big( 
 \varsigma({\nabla}_Je^{-t^2{D}^2})(x, \eta) \Big) \,  d\eta dx_F.
$$

When we apply the rescaling operator $(\cdot)_{1/t}$, the integral is unchanged  just as in \cite{Getz}  p.\ 175.  Replacing $\eta $ by $\eta/t$ gives a factor of $t^{q}$ when doing the integration in $\eta$.  The only part of $ (a_J)_{1/t}(x)\varsigma(\nabla_Je^{-t^2D^2})_{1/t}$ which contributes comes from $C^{\infty}(\wedge^q \nu^*) \otimes_{C^{\infty}(M)}    C^{\infty}  (T^*M, \End(E))$, since we are using the Berezin integral, and this contributes the factor $t^{-q}$, which cancels the $t^q$.

The result now follows by summing over $J$ and multiplying by $t^{k + 2|\ell |}$.
\end{proof}

Now suppose that $\ell \neq (0,...,0)$.   Then
$$
 \int_0^{\ep}2 t^{2z-1} e^{-t^2}\Tr_s(a^{k,\ell}(tD)e^{-t^2 D^2})  dt \,\, = \,\,
 $$
 $$
\int_0^{\ep}2 t^{2z-1} e^{-t^2} (2 \pi)^{-q} \int_{\nu^*} 
t^{k + 2|\ell | }\sum_J  \sum_{\gamma \in  \cG_x^x} (a_J)_{1/t}(\gamma^{-1}) 
 \Tr_s \Big(  \varsigma( \nabla_J e^{-t^2\what{D}^2})( x, \eta)_{1/t} \Big)\,  d\eta dx_F dt.
$$

Set 
$$
 f(t) = e^{-t^2} (2 \pi)^{-q} \int_{\nu^*} 
t^{k + 2|\ell | }\sum_J  \sum_{\gamma \in  \cG_x^x} (a_J)_{1/t}(\gamma^{-1}) 
 \Tr_s \Big(  \varsigma( \nabla_J e^{-t^2\what{D}^2})( x, \eta)_{1/t} \Big)\,  d\eta dx_F.
 $$
The results in Section \ref{prelim} imply that $f(t)$ is continuous at $t=0$.  So   
$$
 \tau_0(\gamma a^{k,\ell}(D)(\I+D)^{-k/2 - |\ell|}) \,\, = \,\, 
\Gamma(k/2+|\ell|)^{-1}Res_{z=0} \Bigl[  \int_0^{\ep} 2 t^{2z-1}f(t) dt \Bigr],
$$  
which is approximated by  (and, because it is independent of $\ep$, actually equals)
$$
\Gamma(k/2+|\ell|)^{-1}Res_{z=0} \Bigl[ \int_0^{\ep} 2 t^{2z-1}f(0) dt \Bigr]  \,\, = \,\,  \Gamma(k/2+|\ell|)^{-1}Res_{z=0} \Bigl[  \frac{t^{2z}}{z}  f(0)  \,\, |_0^{\ep} \Bigr] \,\, = \,\,
$$
$$
\Gamma(k/2+|\ell| )^{-1} Res_{z=0} \Bigl[\frac{\ep^{2z}}{z} f(0)  \Bigr]     \,\, = \,\,
\Gamma(k/2+|\ell| )^{-1} \lim_{t \to 0}f(t)      \,\, = \,\,    
$$
$$ 
\Gamma(k/2+|\ell|)^{-1} \lim_{t \to 0}  \Bigl[e^{-t^2} (2 \pi)^{-q} \int_{\nu^*} 
t^{k + 2|\ell | }\sum_J  \sum_{\gamma \in  \cG_x^x} (a_J)_{1/t}(\gamma^{-1}) 
 \Tr_s \Big(  \varsigma( \nabla_J e^{-t^2\what{D}^2})( x, \eta)_{1/t} \Big)\,  d\eta dx_F  \Bigr].  
$$
By the results in Section \ref{prelim}, we may interchange the limit and the integration to get
$$ 
\Gamma(k/2+|\ell|)^{-1} (2 \pi)^{-q} \int_{\nu^*}   \lim_{t \to 0}  \Bigl[e^{-t^2} 
t^{k + 2|\ell | }\sum_J  \sum_{\gamma \in  \cG_x^x} (a_J)_{1/t}(\gamma^{-1}) 
 \Tr_s \Big(  \varsigma( \nabla_J e^{-t^2\what{D}^2})( x, \eta)_{1/t} \Big)\Bigr]\,  d\eta dx_F .  
$$
By Theorem 6.1 of \cite{BH2016a}, Equation \ref{akleq},  and the results of Section \ref{prelim},
$$
 \lim_{t \to 0}  
t^{k + 2|\ell | }\sum_J (a_J)_{1/t} \varsigma( \nabla_J e^{-t^2\what{D}^2})( x, \eta)_{1/t}  \,\, = \,\, 
 \lim_{t \to 0}  
 \sum_J a_Je^{-\frac{1}{4}\Omega_{\nu}(\pa/\pa\eta,\pa/\pa \eta') }  p^J_0(x, \eta) \wedge q_0(x, \eta') \, |_{\eta' =  \eta},
$$
where  $p^J_0(x, \eta)$ is the leading symbol of $t^{b_J}\nabla_J$ and $q_0(x, \eta')$ is the leading symbol of $e^{-t^2\what{D}^2}$, with $\eta'$ substituted for $\eta$.   Since $b_J \geq 1$,  $p^J_0(x, \eta) = 0$, and 
$$
 \lim_{t \to 0}  
t^{k + 2|\ell | }\sum_J (a_J)_{1/t} \varsigma( \nabla_J e^{-t^2\what{D}^2})( x, \eta)_{1/t}  \,\, = \,\, 0.
$$
Thus, if $\ell \neq (0,...0)$,
$$
\tau_0(\gamma a^{k,\ell}(D)(\I + D^2)^{-k/2 - |\ell|}) \,\, = \,\,  0.
$$

\medskip
For the case $k \neq 0$, and $\ell = (0,...,0)$, we have $t^k \sum_J (a_J)_{1/t} =  a^{k,0}(tD)_{1/t} = a_0 d_{\nu}a_1 \cdots  d_{\nu}a_k$.  The results in Section \ref{prelim} give
$$
\lim_{t \to 0} 
\Tr_s \Big(  \varsigma( e^{-t^2\what{D}^2})_{1/t} \Big)  \,\, = \,\, e^{-\frac{1}{2} \Xi }  \wedge  e^{- |\eta|^2 -\frac{1}{2}\Omega_E+ \frac{1}{16}\Omega_{\nu}}(1)  \,\, = \,\,   e^{- |\eta|^2 -\frac{1}{2}\Omega_E+ \frac{1}{16}\Omega_{\nu}}(1),
$$
since $ \Xi  = 0$ for the Dominguez metric.  Proposition \ref{intexp} then finishes the proof for this case. 

\medskip
For the case $k=0$ we have, following \cite{CM95}, p.\ 230, 
$$
\phi_0(a_0) \,\, = \,\, Res_{z=0} \Bigl[ \Tr_s(z^{-1} a_0(\I + D^2)^{-z})  \Bigr] \,\, = \,\, 
Res_{z=0} \Bigl[ \Tr_s((z\Gamma( z))^{-1}  a_0\int_0^{\infty} t^{ z-1}e^{-t(\I+D^2)} dt ) \Bigr]   \,\, = \,\, 
$$
$$
Res_{z=0} \Bigl[ \Tr_s(  a_0\int_0^{\infty} t^{ z-1}e^{-t(\I+D^2)} dt ) \Bigr],
$$
since $z\Gamma( z) = \Gamma( z+1)$, so $z\Gamma( z) \, | _{z=0} = 1$.
Then proceed just as in the case $k > 0$ to get 
$$
\quad  \phi_{0} (a_0) \,\, = \,\, 
\int_{M}  \sum_{\cG_x^x}  a_0 (\gamma^{-1}) 
 \ch(E) \wedge {\what A} (\nu^*) \wedge  dx_F.
$$

To complete the proof of Theorem \ref{CCchar}, we have the following. 
\begin{proposition}\label{ASresult}
Denote the cohomology classes of  the sequences $(\phi_{k})_{k\geq 0}$, and $(\what{\phi}_{k})_{k\geq 0}$, $k$ even, by  $\ch(M,F)$ and $\ch(\what{M}, \what{F})$ respectively.  Then the following diagram commutes on the image of the Baum-Connes assembly map in $K_*(C^*_{\max}(\cG))$.  

\begin{picture}(400,70)
\put(150,55){$K_*(C^*_{\max}(\cG))$}
\put(225,63){$\ch(M,F)$}
\put(215,59){\vector(1,0){60}}
\put(195,45){\vector(1,-1){35}}
\put(165,20){$\ch(\what{M}, \what{F})$}
\put(285,50){{\vector(-1,-1){38}}}
\put(285,55){$\Z$}
\put(235,5){$\R$}
\end{picture}

\end{proposition}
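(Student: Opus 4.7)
The plan is to reduce the equality of the two Chern character classes on Baum-Connes image elements to the equality of certain local index integrals over $M$. Let $[e]\in K_*(C^*_{\max}(\cG))$ be in the image of the maximal assembly map, represented (by the geometric construction of the assembly map, see \cite{BC00, ConnesBook}) as the leafwise index of a $\cG$-equivariant leafwise Dirac-type operator $P$ twisted by a holonomy-equivariant Hermitian bundle $V\to M$. The corresponding idempotent $e_P\in M_n(C_c^{\infty}(\cG))$ admits a standard heat-kernel description, and the pairing of either Chern character with $[e]$ reduces to evaluating $(\phi_k)$, respectively $(\what{\phi}_k)$, on $e_P$.

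First I would compute $\langle \ch(M,F),[e_P]\rangle$ using the local formula of Theorem \ref{CCchar}. Substituting the heat-kernel representative of $e_P$ and applying the McKean-Singer type argument of Section \ref{proof}, the pairing localizes as $t\to 0$ to the diagonal contribution $\gamma=\overline{x}$ and reduces to the integer
$$
\int_M \ch(V)\wedge \ch(E)\wedge \what{A}(\nu^*)\wedge dx_F,
$$
the classical Atiyah-Singer integrand for the leafwise Dirac operator twisted by $V$ with coefficients coming from the transverse Dirac triple.

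Next I would apply the same strategy to $\langle \ch(\what{M},\what{F}),[e_P]\rangle$, using the Atiyah super-trace $\tau_s$ in place of $\Tr_s$. Because the Atiyah trace on $B(\what{\maH})^{\Gamma}$ is defined by integration of the pointwise supertrace over the fundamental domain $M\subset\what{M}$, and because $\what{V},\what{E},\what{\nu}$ together with their connections and curvatures are all pullbacks under $\rho$, the pointwise integrand on $M\subset\what{M}$ coincides with the one computed downstairs. The sum over $\what{\cG}_x^x$ again collapses to the unique lift of the constant loop $\overline{x}$ at $x$ (cf. the lemma preceding Definition \ref{action}). Hence both pairings evaluate to the same real number, namely the integer displayed above.

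The main obstacle is to rigorously justify, in the Type II setting, that the Baum-Connes index idempotent localizes to the diagonal contribution in the cocycle pairing. In the Type I case this is Connes-Moscovici's local index theorem, already implicit in the proof of Theorem \ref{CCchar}. For the Type II case, one needs a semifinite McKean-Singer identity together with short-time heat-kernel asymptotics for $\what{D}$ that are compatible with the Atiyah trace $\tau$; these are the foliation-adapted analogues of Atiyah's original $L^2$ analysis, which go through thanks to the uniform bounded geometry of $(\what{M},\what{F})$ and the symbol calculus of \cite{BH2016a, BH2016b}. Once this localization is in hand, the commutativity of the diagram follows from the identification of both pairings with the same local integral on $M$, yielding the Atiyah $L^2$ covering index theorem for Riemannian foliations.
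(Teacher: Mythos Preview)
Your plan is workable in principle but far more elaborate than what is needed, and it leaves open exactly the analytic point you flag as the ``main obstacle.''  The paper's argument bypasses all of that.  The key fact you do not use is that any $K$-theory class in the image of the maximal Baum-Connes assembly map can be represented as $[e]-[e']$ with $e,e'\in M_\infty(\widetilde{C}_c^\infty(\cG))$ whose non-scalar parts are supported in an \emph{arbitrarily small} neighborhood of the unit space $\cG^{(0)}\cong M$ (this is the content of Remark~\ref{BC-Free}).  Once one has such representatives, the explicit cocycles of Theorem~\ref{CCchar} are applied \emph{directly}: the convolution products $e(d_\nu e)^k$ and $e'(d_\nu e')^k$ are again supported near the units, so in the sum $\sum_{\gamma\in\cG_x^x}$ only the constant class $\overline{x}$ survives, and likewise in $\sum_{\what\gamma\in\what\cG_x^x}$ only the constant class at $x$ survives.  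Since $\ch(\what E)$, $\what A(\what\nu^*)$ restricted to the fundamental domain coincide with $\ch(E)$, $\what A(\nu^*)$, the two integrals over $M$ are literally equal.  No heat-kernel asymptotics, no semifinite McKean--Singer, no second localization argument are required; the short-time analysis has already been absorbed into the proof of Theorem~\ref{CCchar}.

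Your approach, by contrast, goes back to a geometric realization of the assembly-map image via a leafwise Dirac operator and its heat-kernel idempotent, and then attempts to redo a local-index-type limit inside the pairing.  Beyond duplicating effort, this introduces a genuine gap: the heat-kernel idempotent of a leafwise operator is not compactly supported on $\cG$, so it does not lie in $C_c^\infty(\cG)$, and the cocycle formulas of Theorem~\ref{CCchar} as stated do not apply to it without first extending the cocycles to a suitable smooth subalgebra closed under holomorphic functional calculus and containing such idempotents.  That extension is not provided here.  The paper's small-support argument avoids this entirely by staying inside $M_\infty(\widetilde{C}_c^\infty(\cG))$ from the outset.
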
  

\begin{proof}   
If $X$ is in the image of the Baum-Connes assembly map, it can be represented by  $[e] - [e']$, where $e$ and $e'$  are projections in $M_\infty( \wtit{C}^{\infty}_c(\cG))$ with the supports of the corresponding elements of $M_\infty( {C}^{\infty}_c(\cG))$ in an arbitrarily small neighborhood of the units $\cG^{(0)} \sim M \subset \cG$. See Remark \ref{BC-Free}.  The tilde indicates that a unit has been added.  Then the supports of the elements $e (d_{\nu}e)^k$ and $e' (d_{\nu}e')^k$ are also contained in an arbitrarily small neighborhood of $\cG_0 \sim M \subset \cG$. 

Apply the first part of Theorem  \ref{CCchar} to get 
$$
\langle \ch(M,F), X \rangle \,\, = \,\,    \frac{1}{k!} \int_{M}  \sum_{\gamma \in \cG_x^x} \big[ \tr(e (d_{\nu}e)^k) - \tr(e' (d_{\nu}e')^k)\big](\gamma^{-1})  \wedge \ch(E) \wedge \what{A} (\nu^*) \wedge  dx_F,
$$
and
$$
\langle \ch(\what{M}, \what{F}), X \rangle  \,\, = \,\,    \frac{1}{k!} \int_{M \subset \what{M}} 
 \sum_{\what{\gamma} \in \what{\cG}_x^x} \big[ \tr(e (d_{\nu}e)^k) - \tr(e' (d_{\nu}e')^k)\big]((\rho \circ\what{\gamma})^{-1})    
 \wedge \ch(\what{E}) \wedge {\what A} (\what{\nu}^*) \wedge  dx_F.
$$
Because of the restrictions on the supports, both sums collapse to just the constant path at $x$, so the two expressions are equal.
\end{proof}

\section{Statement of the Theorem for transverse spectral triples} \label{statementCM}

In this section we will be content to give the statement of the theorem for two transverse spectral triples associated to $F$.  The proof of Theorem \ref{CCchar} can be easily extended, using the Getzler calculus \cite{Getz}, to prove Theorem \ref{tranversethm} below.  The advantage of using these spectral triples is that we do not have to assume that the normal bundle of $F$ is integrable.

The  transverse spectral triples are given as follows.  

\medskip
1.  The Type I Connes-Moscovici spectral triple: $(C^{\infty}_c(\cG^T_T), \maH_T, D_T)$.   $T$ is a complete transversal for the foliation $F$,  and the trace is the usual super trace, constructed using the Berezin integral, for operators on $\maH_T = L^2(T, (\cS_{\nu} \otimes E) |_T)$.  $\cG^T_T \subset \cG$ is the sub groupoid of elements which start and end in $T$.  $D_T$ can be taken to be either $D$ or $\wtit{D}$, restricted to $T$.  

This transverse spectral triple is Morita equivalent to the Type I spectral triple given is Section \ref{statement}, and so is an even spectral triple with simple dimension spectrum contained in the set  $\{k \in \N \, | \, k \leq q \}$.  See \cite{K97}.

\medskip
2.  The Type II Atiyah spectral triple: $(C^{\infty}_c(\cG^T_T),  B(\what{\maH}_{\what{T}})^{\Gamma},  \what{D}_{\what{T}})$.  
$\rho:(\what{M}, \what{F})\to (M,F)$ is a (possibly non-compact) Galois foliation cover of $M$, with covering group $\Gamma$.   
The transversal $\what{T}$ is the inverse image of $T$.  $\what{D}_{\what{T}}$  and $\what{\cS}_{\nu} \otimes\what{E}$ are the pull backs of  $D_T$ and $\cS_{\nu} \otimes E$.    $\what{\maH}_{\what{T}}  =L^2(\what{T}, (\what{\cS}_{\nu} \otimes \what{E}) |_{\what{T}})$.  $B(\what{\maH}_{\what{T}})^{\Gamma}$ is the algebra of  bounded operators on $\what{\maH}_{\what{T}}$ which are $\Gamma$-invariant.
The trace is the usual super trace, constructed using the Berezin integral, restricted to a fundamental domain of $T$ in $\what{T}$, denoted $T \subset \what{T}$.  

As in the main result:  the algebra is the same as in the Type I case; $B(\what{\maH}_{\what{T}})^{\Gamma}$ is the analog of the von Neuman algebra used by  Atiyah in the $L^2$ covering index theorem; $\what{D}_{\what{T}}$ is $\Gamma$-invariant;  for a non-compact covering foliation,  this is Morita equivalent to the Type II spectral triple given is Section \ref{statement}, and so is an even spectral triple with simple dimension spectrum contained in the set  $\{k \in \N \, | \, k \leq q \}$; for a compact covering, this is a slight modification of the Type I Connes-Moscovici spectral triple above.

\begin{theorem}\label{tranversethm}
Let $\rho:(\what{M}, \what{F})\to (M,F)$ be a Galois covering foliation of $(M,F)$.  Assume that  $F$  has even codimension and is transversely spin.   Let $a_0, \ldots, a_k \in C_c^\infty (\cG^T_T)$.  The  sequences $(\phi_{k})_{k\geq 0}$, and $(\what{\phi}_{k})_{k\geq 0}$, $k$ even, where
$$
\phi_{k} (a_0, \ldots, a_k) \,\, = \,\,    \frac{1}{k!} \int_{T}  \sum_{\gamma \in \cG_x^x} (a_0  d_{\nu} a_1  \cdots d_{\nu} a_k)(\gamma^{-1})  \wedge \ch(E) \wedge \what{A} (\nu^*),
$$
and
$$
\what{\phi}_{k} (a_0, \ldots, a_k) \,\, = \,\,    \frac{1}{k!} \int_{T \subset \what{T}} 
 \sum_{\what{\gamma} \in \what{\cG}_x^x} (a_0  d_{\nu} a_1  \cdots d_{\nu} a_k)((\rho \circ\what{\gamma})^{-1})    
 \wedge \ch(\what{E}) \wedge {\what A} (\what{\nu}^*)
$$
are cocycles in the $(b,B)$ bicomplex for $C_c^\infty (\cG^T_T)$.

The cohomology classes of  $(\phi_k)$ and $(\what{\phi}_k)$ are the Connes-Chern characters of  $(C^{\infty}_c(\cG^T_T), \maH_T, D_T)$ and  $(C^{\infty}_c(\cG^T_T), B(\what{\maH}_{\what{T}})^{\Gamma},  \what{D}_{\what{T}})$, respectively.  These classes induce the same map on the image of the Baum-Connes assembly map in $K_*(C^*_{\max}(\cG^T_T))$, \cite{BC00}.   That is,  we have an Atiyah $L^2$ covering index theorem for these spectral triples associated to foliation coverings.
\end{theorem}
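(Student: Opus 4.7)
The plan is to mirror the proof of Theorem \ref{CCchar} but now on the genuine (non-foliated) manifold $T$ (respectively $\what{T}$), where the classical Getzler calculus of \cite{Getz} applies directly to $D_T$ (respectively $\what{D}_{\what{T}}$). The reason the integrability hypothesis on $\nu$ was needed in Theorem \ref{CCchar} was solely to ensure that the symbol of $D^2$ on $M$ has grading at most two so that the $\pit$A$\Psi$DO formalism of \cite{BH2016a} could be applied; on the transversal, $D_T$ is an honest self-adjoint Dirac-type operator on a Riemannian spin manifold, and no such hypothesis is required. So the first step is simply to observe that on $T$ the Getzler rescaling gives $p_0^{t^2 D_T^2} = |\eta|^2 + \tfrac12 \Omega_E$ directly, without the troublesome bracket term, and consequently the computation of $\lim_{t\to 0}\varsigma(e^{-t^2 D_T^2})_{1/t}$ reduces to the classical formula $e^{-|\eta|^2 - \frac12 \Omega_E + \frac{1}{16}\Omega_\nu}(1)$.

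Next, I would establish that the CM cocycle formulas on $T$ and $\what{T}$ reduce, by the same residue and Mellin-transform manipulations carried out in Section \ref{proof}, to an integral over $T$ (or over a fundamental domain $T\subset\what{T}$) of the supertrace of the heat symbol convolved with $a_0 d_\nu a_1 \cdots d_\nu a_k$. The Dominguez-Alvarez L\'opez trick still applies since $F$ is Riemannian, so the extra $\wtit c(\mu)$ and $\wtit\Xi$ contributions vanish for a suitable choice of metric, and once again all $\ell\neq(0,\dots,0)$ terms drop out because the leading symbols of the associated differential operators vanish. The residue at $z=0$ then isolates the constant term in $t$, and Proposition \ref{intexp} (whose proof is purely local and carries over verbatim to the transversal) identifies this constant term with the local formula $\ch(E)\wedge\what{A}(\nu^*)$ pulled down to $T$.

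The main obstacle, as I see it, is verifying that the transverse spectral triples are genuinely Morita equivalent to the ambient ones from Section \ref{statement}, so that one knows a priori that the two Connes-Chern characters compute \emph{the same} K-theoretic pairing. This is indicated in the text (with a reference to \cite{K97}) but requires one to trace through the Morita equivalence between $C^\infty_c(\cG)$ and $C^\infty_c(\cG^T_T)$ for $T$ a complete transversal, and to check compatibility of the holonomy actions on $L^2(\cS_\nu\otimes E)$ restricted appropriately. Once this is in hand, the cocycle property in the $(b,B)$-bicomplex follows from the general Connes-Moscovici framework \cite{CM95}, since the triples are regular, even, and have simple discrete dimension spectrum bounded by $q$.

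Finally, for the comparison of classes on the image of Baum-Connes, I would repeat the argument of Proposition \ref{ASresult}: represent an element of the image as $[e]-[e']$ with $e,e'\in M_\infty(\wtit{C^\infty_c(\cG^T_T)})$ supported in an arbitrarily small neighborhood of the unit space $T\subset\cG^T_T$; then $e(d_\nu e)^k$ is likewise supported near the units, so that the sums $\sum_{\gamma\in\cG^x_x}$ and $\sum_{\what\gamma\in\what\cG^x_x}$ collapse to the single contribution from the constant path at $x\in T$, and the two formulas coincide pointwise on $T$. This proves both that $\langle\ch(M,F),X\rangle = \langle\ch(\what M,\what F),X\rangle\in\Z$ and that the diagram of Proposition \ref{ASresult} (reformulated for $\cG^T_T$) commutes, establishing the $L^2$-covering index theorem in this transverse setting.
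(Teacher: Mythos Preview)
Your proposal is correct and follows precisely the route the paper indicates (the paper itself gives no detailed proof of Theorem~\ref{tranversethm}, stating only that the argument of Theorem~\ref{CCchar} ``can be easily extended, using the Getzler calculus \cite{Getz}''). Two minor over-complications: on the transversal $T$ the operator $D_T$ is an honest Dirac operator on a spin manifold with no mean-curvature correction, so the Dominguez--Alvarez~L\'opez step is unnecessary there; and the Morita equivalence you flag as ``the main obstacle'' is not actually used in the Chern-character computation---it is invoked in the paper only to import the regularity and simple-dimension-spectrum properties from \cite{K97}, which are prerequisites for applying the Connes--Moscovici local formula rather than ingredients of the computation itself.
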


Finally, we point out that the above Morita reduction method of restriction to a complete transversal is not generally accessible when the action of $\Gamma$ is   not free.

\begin{example}\label{Notrans}

Recall the spectral triple 
$$
(C^{\infty}_c(\cG) \rtimes \Gamma, \mathcal{N} \subset B(M,E \otimes
\wedge \nu^*) \otimes l^2\Gamma, D_{E}\rtimes \Gamma)
$$
from \cite{BH2016b}.  Here $\Gamma$ is a countable group of diffeomorphisms acting properly,  {\underline{but not freely}},  on the possibly non-compact manifold $M$,  preserving a foliation $F$ with normal bundle $\nu^*$ and homotopy graph $\cG$.  $M/\Gamma$ is assumed to be a compact space. The bundle $E$ is basic, $\Gamma$ equivariant, and Hermitian.   The operator $D_E$ is a twisted transverse Dirac operator for $F$, and   $\mathcal{N}$ is a certain von Neumann algebra.   Given $\phi \in C^{\infty}_c(\what{\cG}) \rtimes \Gamma$,   $\phi(g) \in C^{\infty}_c(\what{\cG})$, $g \in \Gamma$, also acts on sections of $E \otimes \wedge^* \nu^*$.  Finally,  the trace $\TR$ used is:  for certain  $A \in \maN$, and all $\phi \in C^{\infty}_c(\what{\cG}) \rtimes \Gamma$, $\phi \circ A$ is $\TR$ trace class with
 $$
 \TR(\phi\circ A) \,\, = \,\,  \sum_{g \in \Gamma} \Tr(g^{-1} \phi(g) \circ A_{g,e}),
 $$
 where $ \Tr$ is just the usual trace.
 
\end{example}

As $\Gamma$ does not act freely, it is not possible in general to reduce the associated index problem  to the corresponding one on a complete transversal. Indeed, a $\Gamma$-equivariant complete transversal does not  always exist  \cite{BH2016b}, so the Morita reduction method to a transversal is not available, and we must use our global constructions.

\end{document}